\numberwithin{equation}{section}
\theoremstyle{plain}
\newtheorem{theorem}{Theorem}[section]
\newtheorem{corollary}[theorem]{Corollary}
\newtheorem{proposition}[theorem]{Proposition}
\newtheorem{lemma}[theorem]{Lemma}
\newtheorem{lemma-definition}[theorem]{Lemma-Definition}
\theoremstyle{definition}
\newtheorem{definition}[theorem]{Definition}
\newtheorem{example}[theorem]{Example}
\theoremstyle{remark}
\newtheorem{remark}[theorem]{Remark}
\newcommand{\C}{{\mathcal C}}
\newcommand{\E}{{\mathcal E}}
\newcommand{\Aa}{{\mathbb{A}}}
\newcommand{\D}{{\mathcal D}}
\newcommand{\K}{{\mathcal K}}
\newcommand{\M}{{\mathcal M}}
\newcommand{\Z}{{\mathcal Z}}
\newcommand{\Ss}{\mathbb{S}}
\newcommand\id{\operatorname{id}}
\newcommand\toto{{\longrightarrow}}
\newcommand\FPdim{\operatorname{FPdim}}
\newcommand\KER{\mathfrak{Ker}}
\newcommand\Aut{\operatorname{Aut}}
\newcommand\Irr{\operatorname{Irr}}
\newcommand\J{\mathcal{J}}
\newcommand\Hom{\operatorname{Hom}}
\newcommand\Opext{\operatorname{Opext}}
\newcommand\Rep{\operatorname{Rep}}
\newcommand\vect{\operatorname{Vect}}
\newcommand{\uno}{\textbf{1}}
\begin{document}

\title[Jordan-H\" older theorem for a class of fusion categories]{A Jordan-H\"
older theorem for weakly group-theoretical fusion categories}
\author{Sonia Natale}
\address{Facultad de Matem\'atica, Astronom\'\i a y F\'\i sica,
Universidad Nacional de C\'ordoba, CIEM -- CONICET, (5000) Ciudad
Universitaria, C\'ordoba, Argentina}
\email{natale@famaf.unc.edu.ar \newline \indent \emph{URL:}\/
http://www.famaf.unc.edu.ar/$\sim$natale}
\thanks{This work was partially supported by CONICET and Secyt (UNC)}
\subjclass[2010]{18D10; 16T05}
\keywords{fusion category; nilpotent fusion category; weakly group-theoretical fusion category; composition series; Jordan-H\" older theorem}
\date{Revised version of November 23, 2015}

\begin{abstract} We prove a version of the Jordan-H\" older theorem in the context of weakly group-theoretical fusion categories. This allows us to introduce the composition factors and the length of such a fusion category $\C$, which  are in fact Morita invariants of $\C$.  
\end{abstract}

\maketitle

\section{Introduction}

Weakly group-theoretical fusion categories were introduced in \cite{ENO2}. By definition, a fusion category $\C$ is weakly group-theoretical if it is (categorically) Morita equivalent to a nilpotent fusion category, that is, if there exist a nilpotent fusion category $\D$ and an idecomposable right module category $\M$ over $\D$ such that $\C$ is equivalent to the fusion category $\D^*_\M$ of $\D$-linear endofunctors of $\M$. 

We shall work over an algebraically closed base field $k$ of characteristic zero. Every weakly group-theoretical fusion category has integer Frobenius-Perron dimension. It is an open question whether any fusion category with this property is weakly group-theoretical \cite[Question 2]{ENO2}. In fact,  all known examples of fusion categories with integer Frobenius-Perron dimension are weakly group-theoretical; moreover, it is shown in \cite[Proposition 4.1]{ENO2} that the class of weakly group-theoretical fusion categories is stable under group 
extensions and  equivariantizations. It is also stable under taking Morita equivalent categories, tensor products,
Drinfeld center, fusion subcategories and components of quotient categories.

\medbreak The main result of this paper is an analogue of the Jordan-H\" older theorem from group theory for weakly group-theoretical fusion categories. 

We consider composition series of $\C$ defined as follows. In view of results of \cite{ENO2}, a fusion category $\C$ is weakly group-theoretical if and only if there exists a series of fusion categories
\begin{equation}\label{cs-wgt-1}\vect = \C_0, \C_1, \dots , \C_n = \C,\end{equation}
such that for all $1 \leq i \leq n$, the Drinfeld center $\Z(\C_i)$ contains a Tannakian subcategory $\E_i$ and the de-equivariantization of the M\" uger centralizer $\E_i'$ in $\Z(\C_i)$ by $\E_i$ is equivalent to $\Z(\C_{i-1})$ as braided fusion categories. (See Section \ref{prels} below for the unexplained notions involved in this characterization.)

\medbreak We call such a series \eqref{cs-wgt-1} a \emph{central series} of the weakly group-theoretical fusion category $\C$. Since the categories $\E_i$ are Tannakian, then, for all $i = 1, \dots, n$, there exist finite groups $G_1, \dots, G_n$, such that $\E_i \cong \Rep G_i$ as symmetric fusion categories. The groups $G_1, \dots, G_n$ are called the \emph{factors} of the series \eqref{cs-wgt-1}. We call two central series \emph{equivalent} if they have the same (up to isomorphisms) factors counted with multiplicities.

We define a \emph{composition series} of $\C$ to be a central series whose factors are simple groups.   

\medbreak Our main result, Theorem \ref{main}, states that any two composition series of a weakly group-theoretical fusion category are equivalent.  The  proof of Theorem \ref{main} is given in Section \ref{jordan-hoelder}; it relies on the structure of a crossed braided fusion category, a notion introduced by Turaev \cite{turaev},  in the de-equivariantization of a braided fusion category by a Tannakian subcategory.

\medbreak Theorem \ref{main} allows us to introduce the \emph{composition factors} and the \emph{length} of $\C$ as the factors and length of any composition series of $\C$. By definition, the composition factors and the length are Morita invariants of $\C$. 

We discuss these invariants in some examples in Section \ref{examples}, including group extensions and equivariantizations of weakly group-theoretical fusion categories, group-theoretical fusion categories, nilpotent Hopf algebras and Drinfeld centers. In particular, we show that for group-theoretical Hopf algebras, the composition factors considered in this paper do not coincide, in general, with  those studied in \cite{jh-hopf} in the context of Hopf algebra extensions (see Subsection \ref{gttic}).

\section{Preliminaries}\label{prels}

A fusion category over $k$ is a semisimple tensor category over $k$ with finitely many
isomorphism classes of simple objects. We refer the reader to \cite{ENO}, \cite{DGNOI} for the main features and facts about fusion categories  implicitly used throughout this paper.

\medbreak Let $\C$ be a fusion category over $k$. A fusion subcategory of $\C$ is a full replete tensor subcategory $\D$ of $\C$
stable under taking direct summands. 

A \emph{tensor functor} between fusion categories $\C$ and $\D$  is a $k$-linear strong monoidal functor $F: \C \to \D$.

\medbreak Let $X$ be an object of $\C$. The \emph{Frobenius-Perron dimension} of $X$, denoted $\FPdim X$, is the Frobenius-Perron eigenvalue of the matrix of left multiplication by the class of $X$ in the Grothendieck ring of $\C$ in the basis $\Irr(\C)$ consisting of isomorphism classes of simple objects. The Frobenius-Perron dimension of $\C$ is defined as $\FPdim \C = \sum_{X \in \Irr(\C)}\FPdim X$. 

An object $X$ of $\C$ is \emph{invertible} if $X \otimes X^* \cong \uno \cong X^* \otimes X$. Thus $X$ is invertible if and only if $\FPdim X = 1$.

\medbreak We recall for later use that for objects $X, Y, Z$ of $\C$ there are canonical isomorphisms
\begin{equation}\Hom(X, Y\otimes Z) \cong \Hom(Y^*, Z\otimes X^*) \cong \Hom(Y, X\otimes Z^*).\end{equation}

\subsection{Group extensions of fusion categories}
Let $G$ be a finite group and let $\C$ be a fusion category. A $G$-\emph{grading} on $\C$ is a decomposition \begin{equation}\label{oplus}\C = \oplus_{g\in G} \C_g,\end{equation} such that $\C_g \otimes \C_h \subseteq \C_{gh}$, for all $g, h \in G$. Alternatively, a $G$-grading on $\C$ can be defined as a map $\gamma:\Irr(\C) \to G$ such that $\gamma(Z) = \gamma(X)\gamma(Y)$, for all simple objects $X, Y, Z$ of $\C$ such that $Z$ is a constituent of $X \otimes Y$. See \cite[Section 2.3]{DGNOI}.

If \eqref{oplus} is a $G$ grading on $\C$, then 
$\C_g^* \subseteq \C_{g^{-1}}$, for all $g \in G$, and the neutral component $\C_e$ is a fusion subcategory of $\C$.

The grading \eqref{oplus} is called \emph{faithful} if $\C_g \neq 0$, for all $g \in G$. Two group gradings $\gamma_1:\Irr(\C) \to G_1$ and $\gamma_2:\Irr(\C) \to G_2$ are called \emph{equivalent} if there exists a group isomorphism $f: G_1 \to G_2$ such that $\gamma_2 = f \gamma_1$.


\medbreak The fusion category $\C$ is called a \emph{$G$-extension} of a
fusion category $\D$ if there is a faithful grading $\C = \oplus_{g\in G} \C_g$
with neutral component $\C_e \cong \D$. If this is the case, then $\FPdim \C_g = \FPdim \D$, for all $g \in G$. In particular, $\FPdim \C = |G|\FPdim \D$.

\medbreak If $\C$ is any fusion category, there exist a finite group $U(\C)$, called the
\emph{universal grading group} of $\C$, and a canonical faithful grading $\C =
\oplus_{g \in U(\C)}\C_g$, whose neutral component is the fusion subcategory  $\C_{ad}$, generated by the objects $X\otimes
X^*$, where $X$ runs over the simple objects of $\C$. The category  $\C_{ad}$ is called the \emph{adjoint}
subcategory of $\C$. See \cite{gel-nik}.

\medbreak It is shown in \cite[Corollary 2.6]{DGNOI} that there is a bijective correspondence between equivalence classes of
faithful gradings of $\C$ and fusion subcategories $\D$ of $\C$ containing $\C_{ad}$ such that the
subgroup $G_\D = \{g \in U_\C:\, \D\cap \C_g \neq 0\}$ is a normal subgroup of the universal grading group $U_\C$. 

This correspondence associates to every faithful grading of $\C$ its trivial component $\D = \C_e$, and to every fusion subcategory $\D$ containing $\C_{ad}$ with $G_\D$ normal in $U_\C$, the $U_\C/G_\D$-grading $\C = \oplus_{t \in U_\C/G_\D} \C_{t}$, where $\C_{t} = \oplus_{g \in t}\C_g$. In particular, $\D = \oplus_{g \in G_\D} \C_g$.

\medbreak Note that, if $\D_1$ and $\D_2$ are two fusion subcategories of $\C$ containing $\C_{ad}$, then  $\D_1 \subseteq \D_2$ if and only if $G_{\D_1} \subseteq G_{\D_2}$. 
Therefore, as a consequence of \cite[Corollary 2.6]{DGNOI}, we obtain:

\begin{proposition}\label{normal-gding} Let $\C$ be a fusion category endowed with a faithful grading $\C = \oplus_{g \in G}\C_g$. Then there is a bijective correspondence between equivalence classes of faithful gradings of $\C$ with neutral component $\D$ such that $\C_e \subseteq \D$ and normal subgroups of $G$. This correspondence associates to every normal subgroup $N$ the $G/N$-grading $\C = \oplus_{t \in G/N} \C_{t}$, where $\C_{t} = \oplus_{g \in t}\C_g$. \qed \end{proposition}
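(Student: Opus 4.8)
The plan is to deduce Proposition~\ref{normal-gding} from the bijective correspondence of \cite[Corollary 2.6]{DGNOI} recalled above. First I would reduce the statement to the universal grading: by \cite{gel-nik} the given faithful grading $\C = \oplus_{g \in G}\C_g$ is a quotient of the universal grading $\C = \oplus_{u \in U_\C}\C_u$, that is, there is a surjective group homomorphism $\pi : U_\C \to G$ such that $\C_g = \oplus_{u \in \pi^{-1}(g)} \C_u$ for all $g \in G$; in particular $\C_e = \oplus_{u \in K}\C_u$ where $K = \ker \pi$, so that $\C_{ad} \subseteq \C_e$ and $G_{\C_e} = K$. A faithful grading of $\C$ with neutral component $\D$ satisfying $\C_e \subseteq \D$ is, by \cite[Corollary 2.6]{DGNOI}, the same as a fusion subcategory $\D$ with $\C_{ad}\subseteq \C_e \subseteq \D$ and $G_\D$ normal in $U_\C$; by the remark preceding the proposition, the condition $\C_e \subseteq \D$ translates into $K = G_{\C_e}\subseteq G_\D$.

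Next I would identify, via $\pi$, the poset of normal subgroups of $U_\C$ containing $K$ with the poset of normal subgroups of $G$: sending a normal subgroup $H \trianglelefteq U_\C$ with $K \subseteq H$ to $N := \pi(H) = H/K \trianglelefteq G$, with inverse $N \mapsto \pi^{-1}(N)$. Composing this with the correspondence $\D \leftrightarrow G_\D$ from \cite[Corollary 2.6]{DGNOI} (restricted to subcategories $\D \supseteq \C_e$, i.e. to $G_\D \supseteq K$) yields a bijection between equivalence classes of faithful gradings of $\C$ with neutral component containing $\C_e$ and normal subgroups of $G$. Finally I would check that this bijection sends the normal subgroup $N \trianglelefteq G$ to the grading described in the statement: the subcategory $\D$ corresponding to $\pi^{-1}(N)$ is $\D = \oplus_{u \in \pi^{-1}(N)}\C_u = \oplus_{g \in N}\C_g$, and the associated $U_\C/\pi^{-1}(N)$-grading, after the canonical identification $U_\C/\pi^{-1}(N) \cong G/N$ induced by $\pi$, is exactly $\C = \oplus_{t \in G/N}\C_t$ with $\C_t = \oplus_{g \in t}\C_g$, as claimed.

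The routine points are the compatibility of the various identifications; the only genuinely delicate step is bookkeeping the equivalence relation on gradings (two gradings $\gamma_i : \Irr(\C) \to G_i$ being equivalent when they differ by a group isomorphism $G_1 \to G_2$) so that the construction is well defined on equivalence classes and the counts match those in \cite[Corollary 2.6]{DGNOI}; this amounts to observing that the isomorphism $U_\C/\pi^{-1}(N) \cong G/N$ is canonical, so different choices do not produce inequivalent gradings. Since all of this is a direct unwinding of the cited correspondence together with the standard lattice isomorphism for normal subgroups under a surjection, the proof is short, and I expect no real obstacle beyond this bookkeeping.
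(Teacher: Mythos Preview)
Your proposal is correct and follows essentially the same approach as the paper: the proposition is stated with a \qed immediately after the paragraph recalling \cite[Corollary 2.6]{DGNOI} and the observation that $\D_1 \subseteq \D_2$ if and only if $G_{\D_1} \subseteq G_{\D_2}$, so the intended argument is precisely the one you give---reduce to the universal grading via the surjection $\pi: U_\C \to G$, and combine the cited correspondence with the lattice isomorphism between normal subgroups of $U_\C$ containing $\ker\pi$ and normal subgroups of $G$. Your write-up simply makes explicit the bookkeeping that the paper leaves implicit.
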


\begin{remark}\label{rmk-normal-gding} Observe that, for every normal subgroup $N$ of $G$, the neutral component $\D$ of the associated $G/N$-grading of $\C$ in Proposition \ref{normal-gding}, is  $\D = \oplus_{g \in N}\C_g$. In particular, $\D$ is an $N$-extension of $\C_e$. \end{remark}

\subsection{Normal tensor functors and equivariantization}

Let $\C, \D$ be fusion categories over $k$ and let $F:\C \to \D$ be a tensor functor. We shall denote by $F(\C)$ the fusion subcategory  of $\D$ generated by the essential image of $F$.  Thus $F(\C)$ is generated as an additive category by the objects $Y$ of $\D$ which are subobjects of $F(X)$, for some object $X$ of $\C$.

\medbreak The functor $F$ is called \emph{dominant} if every object $Y$ of $\D$ is a subobject of $F (X)$ for some object $X$ of $\C$. Observe that, if $F: \C \to \D$ is any tensor functor between fusion categories $\C, \D$, then the corestriction of $F$  is a dominant tensor functor   $\C \to F(\C)$.

\medbreak We shall say that an object of a fusion category is \emph{trivial} if it is isomorphic to a direct sum of copies of the unit object. Let $\KER_F$ denote the full tensor subcategory of $\C$ consisting of all objects $X$ such that $F(X)$ is a trivial object of $\D$. Normal tensor functors between tensor categories were defined in \cite[Definition 3.4]{tensor-exact}. Since $\C$ and $\D$ are fusion categories, a tensor functor $F: \C \to \D$ is \emph{normal} if and only if for every simple object
$X \in \C$ such that $\Hom(\uno, F (X)) \neq  0$, we have $X \in \KER_F$. See \cite[Proposition 3.5]{tensor-exact}.

\begin{lemma}\label{nd-equiv} Let $F: \C \to \D$ be a normal tensor functor, where $\C$ and $\D$ are fusion categories. Suppose that $\E \subseteq \C$ is a tensor subcategory such that $\E \cap \KER_F  = \vect$. Then $F$ induces by restriction an equivalence of tensor categories $F\vert_{\E}: \E \to F(\E)$.
\end{lemma}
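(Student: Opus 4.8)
The plan is to show that the restriction $F\vert_{\E}: \E \to F(\E)$ is fully faithful; since it is dominant onto $F(\E)$ by construction and any fully faithful dominant tensor functor between fusion categories is an equivalence, this will suffice. The strategy is to exploit normality of $F$ together with the hypothesis $\E \cap \KER_F = \vect$ to control how $F$ behaves on objects of $\E$: concretely, I will argue that for every simple object $X \in \E$ with $X \not\cong \uno$, we have $\Hom(\uno, F(X)) = 0$. Indeed, if $\Hom(\uno, F(X)) \neq 0$, normality of $F$ forces $X \in \KER_F$, so $X \in \E \cap \KER_F = \vect$, whence $X \cong \uno$, a contradiction. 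This is the key consequence of the hypotheses and the main mechanical input.

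The next step is to upgrade this to full faithfulness. Given simple objects $X, Y \in \E$, I want $\Hom_{\C}(X, Y) \cong \Hom_{\D}(F(X), F(Y))$, and since both sides vanish unless $X \cong Y$ (for the left side by simplicity; for the right side I must check it), it suffices to treat $X = Y$ and show $\Hom_{\D}(F(X), F(X))$ is one-dimensional. Using the canonical isomorphisms $\Hom(F(X), F(X)) \cong \Hom(\uno, F(X) \otimes F(X)^*) \cong \Hom(\uno, F(X \otimes X^*))$ (the latter because $F$ is a tensor functor, so $F(X)^* \cong F(X^*)$ and $F$ preserves tensor products), I reduce to computing $\Hom(\uno, F(Z))$ for $Z = X \otimes X^* \in \E$. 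Decompose $Z = \bigoplus_j Z_j$ into simples in $\E$; then $\Hom(\uno, F(Z)) = \bigoplus_j \Hom(\uno, F(Z_j))$, and by the previous paragraph each summand with $Z_j \not\cong \uno$ vanishes. The multiplicity of $\uno$ in $X \otimes X^*$ is exactly one (as $X$ is simple), and $F(\uno) \cong \uno$, so $\Hom(\uno, F(Z))$ is one-dimensional, as needed. The same computation applied to $X \otimes Y^*$ for non-isomorphic simples $X, Y \in \E$ shows $\Hom(F(X), F(Y)) = 0$ there, since $\uno$ is not a constituent of $X \otimes Y^*$.

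Finally I assemble the pieces: full faithfulness on simples extends by additivity to all of $\E$ (every object of a fusion category is a finite direct sum of simples, and $F$ is additive), and $F\vert_{\E}$ is manifestly a tensor functor and dominant onto $F(\E)$; an additive, fully faithful, dominant tensor functor between fusion categories is an equivalence of tensor categories. I expect the main obstacle to be purely bookkeeping: making sure the canonical isomorphism $\Hom(F(X), F(X)) \cong \Hom(\uno, F(X \otimes X^*))$ is set up correctly using the monoidal structure of $F$ (the isomorphisms $F(X) \otimes F(X^*) \cong F(X \otimes X^*)$ and $F(X^*) \cong F(X)^*$), and verifying that $\KER_F$ being a tensor subcategory together with normality really does give the vanishing statement in the stated form — but none of this requires any deep input beyond Lemma~\ref{nd-equiv}'s hypotheses and the characterization of normality quoted from \cite[Proposition 3.5]{tensor-exact}.
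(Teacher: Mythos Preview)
Your argument is correct. The key observation---that for a simple $X \in \E$ with $X \not\cong \uno$ one has $\Hom(\uno, F(X)) = 0$, because otherwise normality would force $X \in \E \cap \KER_F = \vect$---is exactly the right one, and your use of rigidity to translate this into $\dim \Hom(F(X), F(Y)) = \dim \Hom(X, Y)$ for simples $X, Y \in \E$ is clean.

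The paper's proof follows the same conceptual path but outsources the work: it observes that $F\vert_{\E}$ is itself a dominant normal tensor functor with $\KER_{F\vert_{\E}} = \E \cap \KER_F = \vect$, then invokes \cite[Lemma 3.6]{tensor-exact}, whose part (2) says a normal tensor functor with trivial kernel is full and whose part (1) says a full dominant tensor functor is an equivalence. Your proof is essentially an explicit unpacking of those two facts in the fusion setting, using semisimplicity and rigidity directly. What you gain is a self-contained argument that avoids an external reference; what the paper's version gains is brevity and a statement that applies beyond the semisimple case (the cited lemma does not require fusion). Both routes are equally valid here.
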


\begin{proof} The restriction $F\vert_{\E}: \E \to F(\E)$ is a dominant normal tensor functor whose kernel is $\KER_{F\vert_{\E}} = \E \cap \KER_F$. Hence the assumption implies that $\KER_{F\vert_{\E}}$ is the trivial fusion subcategory of $\E$. 
By \cite[Lemma 3.6 (2)]{tensor-exact}, the functor $F\vert_{\E}$ is full. Being dominant, $F\vert_{\E}$ is an equivalence in view of \cite[Lemma 3.6 (1)]{tensor-exact}. 
\end{proof}

Let $G$ be a finite group and let $\C$ be a fusion category. An \emph{action} of $G$ on $\C$ by tensor autoequivalences is a monoidal functor $\rho: \underline
G \to \underline \Aut_{\otimes} \, \C$, where $\underline G$ is the strict monoidal category whose objects are the elements of $G$, morphisms are identitites and tensor product is given by the multiplication of $G$, and   $\underline{\Aut}_{\otimes}(\mathcal{C})$ is the monoidal category of tensor autoequivalences of $\mathcal{C}$.  

Let $\rho_2^{g,h}:\rho^g\rho^h\rightarrow\rho^{gh}$, $g, h\in G$, and $\rho_0: \id_\C\rightarrow\rho^e$, denote the monoidal structure of the functor $\rho$.

\medbreak The \emph{equivariantization} of $\C$ with respect to the action $\rho$ is the fusion category $\C^G$, whose objects are pairs  $(X, \mu)$, such that $X$
is an object of $\C$ and $\mu = (\mu^g)_{g \in G}$, is a collection of
isomorphisms $\mu^g:\rho^gX \to X$, $g \in G$, satisfying the conditions
$$\mu^g\rho^g(\mu^h)=\mu^{gh}(\rho_2^{g,h})_X, \hspace{1.5cm} \mu_e\rho_{0X} =\id_X,$$ for all $g,h\in G$, $X \in \C$, and morphisms $(X, \mu) \to (X',\mu')$ are morphisms $f: X \to X'$ in $\C$ such that $f\mu^g = {\mu'}^g\rho^g(f)$, for all $g\in G$. 
The tensor product $(X, \mu) \otimes (X',\mu')$ in $\C^G$ is defined by $(X, \mu) \otimes (X',\mu') = (X \otimes X', (\mu^g\otimes{\mu'}^g)\rho_2^g)$, where, for all $g \in G$, $\rho_2^g: \rho^^g(X \otimes X') \to \rho^g(X) \otimes \rho^g(X')$ is the isomorphism given by the monoidal structure of $\rho^g$.

\medbreak The category $\Rep G$ embeds into $\C^G$ as a fusion subcategory and the forgetful functor $F: \C^G \to \C$ is a normal dominant tensor functor with kernel $\KER_F \cong \Rep G$.  See \cite[Subsection 5.3]{tensor-exact}.

\subsection{Braided fusion categories and  group crossed braided fusion categories}

A \emph{braided fusion category} is a fusion category $\C$ endowed with a natural isomorphism $c: \otimes \to \otimes^{op}$, called a \emph{braiding}, that satisfies the hexagon axioms. 

A braided fusion category $\C$ is called \emph{symmetric} if $c_{Y, X}c_{X, Y} = \id_{X\otimes Y}$, for all objects $X, Y$ of $\C$. 

If $G$ is a finite group, then the category $\Rep G$, endowed with the braiding given by the flip isomorphism, is a symmetric fusion category. A \emph{Tannakian fusion category} is a symmetric fusion category equivalent to $\Rep G$ as a braided fusion category.   

\medbreak Let $\C$ be a braided fusion category. Two objects $X, Y$ of $\C$ are said to \emph{centralize each other} if $c_{Y, X}c_{X, Y} = \id_{X\otimes Y}$. The \emph{M\" uger centralizer} of a fusion subcategory $\D$ is the fusion subcategory $\D'$ of $\C$ whose objects $Y$ centralize every object of $\D$. So that, the fusion  subcategory $\D$ is symmetric if and only if $\D \subseteq \D'$.

\medbreak Let $G$ be a finite group. Recall that a \emph{braided $G$-crossed fusion
category} is a fusion category $\D$ endowed with a $G$-grading $\D
= \oplus_{g \in G}\D_g$, an action of $G$ by tensor autoequivalences
$\rho:\underline G \to \underline \Aut_{\otimes} \, \D$, such that $\rho^g(\D_h)
\subseteq
\D_{ghg^{-1}}$, for all $g, h \in G$, and a $G$-braiding $c: X \otimes Y \to
\rho^g(Y) \otimes X$, $g \in G$, $X \in \D_g$, $Y \in \D$, subject to
appropriate compatibility conditions.

\medbreak Suppose that $\E \cong \Rep G$ is a Tannakian subcategory of a braided fusion category $\C$. 
The algebra $k^G$ of functions on $G$ endowed with the regular action of $G$ corresponds to a semisimple commutative  algebra $A$ in $\C$ such that $\Hom(\textbf{1}, A) \cong k$.

The de-equivariantization of $\C$ with respect to $\E$, denoted $\C_G$, is the fusion category $\C_A$ of right $A$-modules in $\C$. The category $\C_G$ is a braided $G$-crossed fusion category in a canonical way and we have $\C \cong (\C_G)^G$. Under this equivalence, the forgetful functor $(\C_G)^G \to \C_G$ becomes identified with the canonical functor $F: \C \to \C_G$, $F(X) = X \otimes A$.

\medbreak The neutral component of $\C_G$ with respect to the associated $G$-grading will be denoted by $\C_G^0$. Then $\C_G^0$ is a braided fusion category and the crossed action of $G$ on $\C_G$ induces an action of $G$ on $\C_G^0$ by braided auto-equivalences.  The equivariantization $(\C_G^0)^G$  coincides with the M\" uger centralizer $\E'$ of the
Tannakian subcategory $\E$ in $\C$, and there is an equivalence of braided fusion categories
$$\C_G^0 \cong \E'_G.$$ 
See \cite[Proposition 3.24]{mueger-crossed}. In particular, the canonical tensor functor $F: \C \to \C_G$ restricts to a dominant normal braided tensor functor $$F\vert_{\E'}: \E' \to \C_G^0.$$

\medbreak Conversely, every $G$-crossed braided fusion category gives  rise, through the equivariantization process, to a braided fusion category containing $\Rep G$ as a Tannakian subcategory. 

Hence the de-equivariantization and equivariantization procedures define inverse bijections
between equivalence classes of braided fusion categories containing $\Rep G$ as a
Tannakian subcategory and equivalence classes of $G$-crossed braided fusion categories
\cite{mueger-crossed}, \cite[Section 4.4]{DGNOI}.

\begin{lemma}\label{normal-h} Let $\D$ be a $G$-crossed braided fusion category. Every $G$-stable fusion subcategory $\E$ of $\D$ determines canonically a normal subgroup $H$ of $G$ such that $h \in H$ if and only if $\E \cap \D_h \neq 0$. The fusion category $\E$ is faithfully graded by $H$ with neutral component $\E \cap \D_e$.
\end{lemma}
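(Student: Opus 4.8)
The plan is to set $H := \{h \in G : \E \cap \D_h \neq 0\}$ and to verify in turn the three assertions of the statement: that $H$ is a subgroup of $G$, that it is normal, and that the decomposition $\E = \oplus_{h\in H}(\E \cap \D_h)$ is a faithful $H$-grading of $\E$ with neutral component $\E\cap\D_e$.

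First I would check that $H$ is a subgroup. Since $\E$ is a fusion subcategory it contains the unit object $\uno$, which is homogeneous of degree $e$ for the given $G$-grading; hence $e \in H$. If $h, k \in H$, pick nonzero objects $X \in \E\cap\D_h$ and $Y \in \E\cap\D_k$. Then $X\otimes Y$ lies in $\E$ because $\E$ is a tensor subcategory, and in $\D_h\otimes\D_k\subseteq\D_{hk}$ by the grading; moreover $X\otimes Y\neq 0$ because $\FPdim(X\otimes Y)=\FPdim X\cdot\FPdim Y>0$. Thus $hk\in H$. Similarly $X^*$ lies in $\E$ (fusion subcategories are closed under duals) and in $\D_h^*\subseteq\D_{h^{-1}}$, with $X^*\neq 0$; hence $h^{-1}\in H$, and $H$ is a subgroup.

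Next, normality. Being $G$-stable means $\rho^g(\E)\subseteq\E$ for every $g\in G$; since $\rho^{g^{-1}}$ is a quasi-inverse of $\rho^g$ this in fact forces $\rho^g(\E)=\E$. Given $h\in H$ and $g\in G$, take a nonzero object $X\in\E\cap\D_h$. Then $\rho^g(X)$ is again nonzero because $\rho^g$ is an equivalence, it lies in $\E$ by $G$-stability, and it lies in $\rho^g(\D_h)\subseteq\D_{ghg^{-1}}$ by the defining property of a $G$-crossed braided fusion category. Hence $\E\cap\D_{ghg^{-1}}\neq 0$, i.e. $ghg^{-1}\in H$, so $H$ is normal in $G$.

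Finally I would establish the grading. Since $\E$ is full, replete and closed under direct summands, every simple object of $\E$ is a simple object of $\D$, hence homogeneous, lying in some $\D_g$, and then necessarily $g\in H$. Decomposing an arbitrary object of $\E$ into a direct sum of simple objects therefore yields $\E=\oplus_{h\in H}(\E\cap\D_h)$; the grading axiom for $\D$ together with the fact that $\E$ is a tensor subcategory gives $(\E\cap\D_h)\otimes(\E\cap\D_k)\subseteq\E\cap\D_{hk}$. This $H$-grading is faithful by the very definition of $H$, and its neutral component is $\E\cap\D_e$. All the computations involved are routine; the only points requiring some care are fixing the precise meaning of ``$G$-stable'' ($\rho^g(\E)\subseteq\E$ for all $g$) and the repeated use of the facts that the tensor product of nonzero objects of a fusion category is nonzero and that an equivalence sends nonzero objects to nonzero objects.
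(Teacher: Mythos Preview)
Your proof is correct and follows essentially the same approach as the paper's: define $H$ as the support of $\E$ in the $G$-grading, use that tensor products of nonzero objects are nonzero to get the subgroup property, use $G$-stability and $\rho^g(\D_h)\subseteq\D_{ghg^{-1}}$ for normality, and read off the faithful $H$-grading from the definition of $H$. Your version is simply more explicit about the routine verifications (identity, inverses, the decomposition of objects of $\E$ into homogeneous simples) that the paper leaves implicit.
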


\begin{proof} Let $H$ be the set of all elements $h \in G$ such that $\E \cap \D_h \neq 0$. Since the tensor product of nonzero objects is a nonzero object, then $H$ is a subgroup of $G$. Suppose $h \in H$, and let $0 \neq X \in \E \cap \D_h$. Since $\E$ is $G$-stable by assumption, we find that, for all $g \in G$, $0 \neq \rho^g(X) \in \E\cap \D_{ghg^{-1}}$. Therefore $ghg^{-1} \in H$, for all $g \in G$, and $H$ is a normal subgroup of $G$.  

It follows from the definition of $H$ that there is a faithful grading $\E = \bigoplus_{h \in H}\E_h$, where $\E_h = \E \cap \D_h$. This finishes the proof of the lemma.  \end{proof}

\begin{proposition}\label{simple-gstable} Let $\D$ be a $G$-crossed braided fusion category. Suppose that $\E \subseteq \D$ is a $G$-stable fusion subcategory such that $\E$ contains no proper nontrivial fusion subcategories.  Then either $\E \subseteq \D_e$ or $\E$ is pointed of prime dimension and the group $G(\E)$ of invertible objects of $\E$ is isomorphic to a normal subgroup of $G$.
\end{proposition}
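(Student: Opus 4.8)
The plan is to use the structural results about $G$-crossed braided fusion categories established in the preliminaries, together with the hypothesis that $\E$ is ``simple'' as a fusion category (no proper nontrivial fusion subcategories). First I would apply Lemma \ref{normal-h}: since $\E$ is $G$-stable, it determines a normal subgroup $H = \{h \in G : \E \cap \D_h \neq 0\}$, and $\E$ carries a faithful $H$-grading $\E = \oplus_{h \in H}\E_h$ with $\E_e = \E \cap \D_e$. The dichotomy in the conclusion will correspond precisely to whether this grading is trivial (i.e.\ $H = \{e\}$, equivalently $\E \subseteq \D_e$) or not.

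Suppose then that $\E \not\subseteq \D_e$, so that $H \neq \{e\}$. I would first argue that the neutral component $\E_e = \E \cap \D_e$ must be trivial: it is a fusion subcategory of $\E$, hence either $\vect$ or all of $\E$; the latter would force $\E = \E_e \subseteq \D_e$, a contradiction. So $\E_e = \vect$ and $\FPdim \E = |H|$. Next, pick any $e \neq h \in H$. The component $\E_h$ is a nonzero object-set closed under the relevant operations; more precisely, using that $\E$ is faithfully $H$-graded with trivial neutral component, I would invoke the standard fact that such a category is pointed with $\FPdim \E_h = 1$ for every $h$ (this follows because $X \otimes X^* \in \E_e = \vect$ for simple $X$, so every simple object of $\E$ is invertible). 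Hence $\E$ is pointed and the map sending an invertible object to its degree gives an isomorphism $G(\E) \cong H$. Then I would use the hypothesis again: $G(\E)$ is a group whose corresponding pointed category $\E = \Vect_{G(\E)}^\omega$ has no proper nontrivial fusion subcategories, which forces $G(\E)$ to have no proper nontrivial subgroups, i.e.\ $G(\E)$ is cyclic of prime order. Therefore $\E$ is pointed of prime dimension, and $G(\E) \cong H$ is a normal subgroup of $G$ by Lemma \ref{normal-h}.

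Assembling these, in the case $H \neq \{e\}$ we get exactly the second alternative, and in the case $H = \{e\}$ we get $\E = \E_e = \E \cap \D_e \subseteq \D_e$, the first alternative. The two cases are mutually exclusive and exhaustive, so this proves the proposition.

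The main obstacle I anticipate is the step identifying $\E$ as pointed once $\E_e = \vect$: one needs to be careful that ``faithfully $H$-graded with trivial neutral component'' genuinely implies all simple objects are invertible. The cleanest route is the observation that for a simple object $X$ of $\E$ lying in degree $h$, the object $X \otimes X^*$ lies in degree $e$, so $X \otimes X^* \in \E_e = \vect$; since $\Hom(\uno, X \otimes X^*) \cong \Hom(X, X) = k$, we must have $X \otimes X^* \cong \uno$, so $X$ is invertible. After that, the reduction to prime order is a purely group-theoretic statement about subgroups of $G(\E)$ corresponding to fusion subcategories of the pointed category $\E$, which is routine.
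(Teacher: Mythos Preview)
Your proposal is correct and follows essentially the same route as the paper: apply Lemma~\ref{normal-h} to get the normal subgroup $H$ and the faithful $H$-grading of $\E$, use simplicity of $\E$ to force the dichotomy $\E\cap\D_e=\E$ or $\E\cap\D_e=\vect$, and in the latter case conclude that $\E$ is pointed with $G(\E)\cong H$ of prime order. The only cosmetic difference is that the paper deduces pointedness from $\FPdim\E_h=\FPdim\E_e=1$ for all $h\in H$, whereas you argue via $X\otimes X^*\in\E_e=\vect$; both are standard and equivalent here.
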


\begin{proof} Let $H$ be the normal subgroup of $G$ attached to $\E$ by Lemma \ref{normal-h}, so that the category $\E$ is faithfully $H$-graded with neutral component $\E\cap \D_e$. Since by assumption $\E$ contains no proper nontrivial fusion subcategories, we must have $\E\cap \D_e = \E$ or $\E\cap \D_e = \vect$. If the first possibility holds, then $\E\subseteq \D_e$. Suppose that the second possibility holds and consider the faithful grading $\E = \bigoplus_{h \in H}\E_h$. Since $\FPdim \E_h = \FPdim \E_e = 1$, for all $h \in H$, it follows that in this case $\E$ must be pointed and its group of invertible objects is isomorphic to $H$. Furthermore, since $\E$ contains no proper nontrivial fusion subcategories, then $H$ must be cyclic of prime order. This proves the proposition. \end{proof}

\section{Weakly group-theoretical fusion categories}\label{wgt}

\subsection{Nilpotent fusion categories}\label{nilpotent} Let $\C$ be a fusion category.  Recall from \cite{gel-nik} that the \emph{upper central series} of $\C$ is defined recursively by $\C^{(0)} = \C$ and $\C^{(j)} = (\C^{(j-1)})_{ad}$, for all $j \geq 1$. The fusion category $\C$ is called \emph{nilpotent} if its upper central series converges to $\vect$, that is, if there exists $n \geq 0$ such that $\C^{(n)} \cong \vect$.

Equivalently, $\C$ is nilpotent if and only if there exists a series of fusion subcategories 
\begin{equation}\label{cs-nil}\vect = \C_0 \subseteq \C_{1} \subseteq \dots \subseteq \C_n = \C,\end{equation}
such that $\C_i$ is a $G_i$-extension of $\C_{i-1}$, for all $i = 1, \dots, n$, for some finite groups $G_1, \dots, G_n$.

\begin{definition}\label{def-csnil} The finite groups $G_1, \dots, G_n$ will be called the \emph{factors} of the series \eqref{cs-nil}. 
A \emph{refinement} of 
\eqref{cs-nil} is a series of fusion subcategories
\begin{equation}\label{refi}\vect = \C_0' \subseteq \C_{1}' \subseteq \dots \subseteq \C_m' = \C,\end{equation} where $\C_i'$ is a $G_i'$-extension of $\C_{i-1}'$, for all $i = 1, \dots, m$, for some finite groups $G_1', \dots, G_m'$,
such that for all  $j = 1, \dots, n-1$, there exists $0 < N_j < m$, with $N_1 <
N_2 < \dots < N_m$, and $\C_j = \C'_{N_j}$.  If  \eqref{refi} is a refinement of
\eqref{cs-nil} and it does not coincide
with \eqref{cs-nil}, we shall say that it is a  \emph{proper refinement}. If \eqref{cs-nil} admits no proper refinement, we shall say it is a \emph{composition series} of $\C$.
\end{definition}

\begin{lemma}\label{f-simple} Let $\C$ be a nilpotent fusion category and suppose $\vect = \C_0 \subseteq \C_{1} \subseteq \dots \subseteq \C_n = \C$ is a series of fusion subcategories of $\C$ as in \eqref{cs-nil} with factors $G_1, \dots, G_n$. Then the following are equivalent:
\begin{itemize}\item[(i)] $\vect = \C_0 \subseteq \C_{1} \subseteq \dots \subseteq \C_n = \C$ is a composition series of $\C$.
\item[(ii)] The factors $G_1, \dots, G_n$ are simple groups.
\end{itemize}   \end{lemma}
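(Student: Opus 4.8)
The plan is to prove the equivalence by relating the refinability of the series \eqref{cs-nil} to the existence of proper intermediate subgroups in the chain of extensions, and then invoking the fact that a group extension factors as a composite of two proper extensions precisely when the group involved is not simple.

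First I would observe that, by Remark \ref{rmk-normal-gding} and Proposition \ref{normal-gding}, inserting a fusion subcategory strictly between $\C_{i-1}$ and $\C_i$ in a way compatible with the grading structure corresponds exactly to choosing a proper nontrivial normal subgroup of $G_i$. More precisely, since $\C_i$ is a $G_i$-extension of $\C_{i-1}$, we have a faithful grading $\C_i = \oplus_{g \in G_i}(\C_i)_g$ with neutral component $(\C_i)_e \cong \C_{i-1}$; by Proposition \ref{normal-gding} the faithful gradings of $\C_i$ with neutral component $\D$ satisfying $\C_{i-1} \subseteq \D$ are in bijection with normal subgroups $N \trianglelefteq G_i$, and for such an $N$ the corresponding neutral component is $\D = \oplus_{g \in N}(\C_i)_g$, which by Remark \ref{rmk-normal-gding} is an $N$-extension of $\C_{i-1}$, while $\C_i$ is in turn a $G_i/N$-extension of $\D$. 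Thus a proper refinement inserting one new term between $\C_{i-1}$ and $\C_i$ exists if and only if $G_i$ admits a proper nontrivial normal subgroup.

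Next I would run the two implications. For (ii) $\Rightarrow$ (i): if some proper refinement \eqref{refi} existed, then for at least one index $i$ the refinement would insert a term strictly between $\C_{i-1}$ and $\C_i$ (since the $\C_j$ occur among the $\C'_{N_j}$ and the refinement is proper, some consecutive pair $\C_{j-1} \subseteq \C_j$ gets subdivided); by the bijection just described this yields a proper nontrivial normal subgroup of $G_i$, contradicting simplicity of $G_i$. Here I should check the bookkeeping that the factors of the refined series between $\C_{j-1}$ and $\C_j$ multiply to give $G_j$ in the appropriate iterated-extension sense, which again follows by repeated application of Remark \ref{rmk-normal-gding}; one also needs that the factor group of a faithful grading is determined up to isomorphism by the grading, which is part of the setup in Proposition \ref{normal-gding}. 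For (i) $\Rightarrow$ (ii): contrapositively, if some $G_i$ is not simple, pick a proper nontrivial normal subgroup $N \trianglelefteq G_i$ and use the associated neutral component $\D = \oplus_{g\in N}(\C_i)_g$ to insert a term $\C_{i-1} \subsetneq \D \subsetneq \C_i$, producing a proper refinement; hence \eqref{cs-nil} is not a composition series.

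The main obstacle I anticipate is purely organizational rather than conceptual: making the correspondence between ``inserting one subcategory into one step of the series'' and ``choosing one normal subgroup of one factor'' completely rigorous, including the claim that an arbitrary proper refinement (which may insert several terms, possibly unevenly distributed among the steps) can be analyzed step by step, and that the factors of a refinement restricted to a single step $\C_{j-1} \subseteq \C_j$ are the factors of a series of extensions for $G_j$ in the sense of finite group theory. This requires knowing that if $\D_1$ is an $A$-extension of $\D_0$ and $\D_2$ is a $B$-extension of $\D_1$, then the $A$ and $B$ assemble into a normal subgroup and quotient of the group grading $\D_2$ over $\D_0$ — which is exactly the content of the bijection in Proposition \ref{normal-gding} together with Remark \ref{rmk-normal-gding}. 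Once this dictionary is set up, the lemma reduces to the elementary fact that a finite group has a composition series with each factor simple iff it cannot be further refined, applied step-by-step to each $G_i$.
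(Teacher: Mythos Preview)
Your proposal is correct and follows essentially the same route as the paper's proof: both rest on the bijection from Proposition~\ref{normal-gding} (together with Remark~\ref{rmk-normal-gding}) between intermediate subcategories $\C_{i-1}\subseteq \D\subseteq \C_i$ with $\C_i$ an extension of $\D$, and normal subgroups of $G_i$. The paper's version is considerably terser---it simply records this bijection and says ``this implies the lemma''---whereas you spell out both implications and explicitly address the case of a refinement inserting several terms in one step; that extra care is not misplaced, but it does not constitute a different argument.
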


\begin{proof} Let $1\leq i \leq n$. Since $\C_i$ is a $G_i$-extension of $\C_{i-1}$, it follows from Proposition \ref{normal-gding} that there is a bijective correspondence between fusion subcategories $\D$ such that $\C_{i-1} \subseteq \D \subseteq \C_i$ and $\C_i$ is an extension of $\D$, and normal subgroups $N$ of $G_i$. Moreover, every such fusion subcategory $\D$ is an extension of $\C_{i-1}$ (see Remark \ref{rmk-normal-gding}). This implies the lemma.
\end{proof}

\begin{remark} Lemma \ref{f-simple} and Proposition \ref{normal-gding} imply that every nilpotent fusion category $\C$ has a composition series. \end{remark}

\begin{example}\label{pointed} Let $G$ be a finite group and let $\omega \in H^3(G, k^*)$. Consider the fusion  category $\C = \C(G, \omega)$ of finite dimensional $G$-graded vector spaces with associativity induced by the $3$-cocycle $\omega$. Thus, $\C(G, \omega)$ is a pointed fusion category whose group of invertible objects is isomorphic to $G$, and it is a nilpotent fusion category. 

Let $\{e\} = G_{(0)} \subseteq G_{(1)} \subseteq \dots \subseteq G_{(n)} = G$ be a composition series of $G$ with factors $G_i = G_{(i)}/G_{(i-1)}$, $i = 1, \dots, n$. Then the series 
$$\vect = \C_0 \subseteq \C_{1} \subseteq \dots \subseteq \C_n = \C,$$
where $\C_i = \C(G_{(i)}, \omega\vert_{G_{(i)}})$, $0 \leq i \leq n$, is a composition series of $\C$ with factors $G_1, \dots, G_n$. \end{example}

\subsection{Weakly group-theoretical fusion categories} A (right) \emph{module category} over a fusion category $\C$ is a finite semisimple $k$-linear abelian category $\M$ endowed with a bifunctor $\otimes: \M \times \C \to \M$ satisfying the associativity and unit axioms for an action, up to coherent natural isomorphisms. 

The module category $\M$ is called \emph{indecomposable}
if it is not equivalent as a module category to a direct sum of non-trivial module categories. If $\M$ is an indecomposable module category over $\C$, then the category $\C^*_{\mathcal M}$ of $\C$-module endofunctors of $\M$ is also a fusion category. 

Two fusion categories $\C$ and $\D$
are \emph{Morita equivalent} if $\D$ is equivalent to 
$\C^*_{\mathcal M}$ for some indecomposable module
category $\mathcal M$. By \cite[Theorem 3.1]{ENO2}, the fusion categories $\C$ and $\D$ are Morita equivalent if and only if its Drinfeld centers are equivalent as braided fusion categories.

\medbreak A  fusion category $\C$ is called \emph{group-theoretical} if it is Morita equivalent to a pointed fusion category, that is, to a fusion category all of whose simple objects are invertible. More generally, $\C$ is called \emph{weakly group-theoretical} if it is Morita equivalent to a nilpotent  fusion category, and it is called solvable if it is Morita equivalent to a cyclically nilpotent fusion category. 

It is well-known that every pointed fusion category is equivalent to the category $\C(G, \omega)$, for some finite group $G$ (isomorphic to the group of invertible objects of $\C$) and $\omega \in H^3(G, k^*)$. Hence every group-theoretical fusion category is Morita equivalent to some of the fusion categories $\C(G, \omega)$.

\medbreak Let $G$ be a finite group. By \cite[Theorem 1.3]{ENO2}, a fusion category $\C$ is Morita equivalent to a $G$-extension of a fusion category  $\D$ if and only if $\Z(\C)$ contains a
Tannakian subcategory $\E \cong \Rep(G)$ such that the de-equivariantization of the M\" uger centralizer $\E'$ by $\E$ is equivalent
to $\Z(\D)$ as a braided fusion categories.

Therefore, if $\C$ is a fusion category, $\C$ is weakly group-theoretical if and only if there exists a series of fusion categories
\begin{equation}\label{cs-wgt}\vect = \C_0, \C_1, \dots , \C_n = \C,\end{equation}
such that for all $1 \leq i \leq n$, the Drinfeld center $\Z(\C_i)$ contains a Tannakian subcategory $\E_i$ and the de-equivariantization of the M\" uger centralizer $\E_i'$ in $\Z(\C_i)$ by $\E_i$ is equivalent to $\Z(\C_{i-1})$ as braided fusion categories. See also \cite[Proposition 4.2]{ENO2}. 

Observe that, for every $i = 1, \dots, n$, there exists a finite group $G_i$ such that $\E_i \cong \Rep G_i$ as symmetric fusion categories.

\begin{definition}\label{def-cs-wgt} A series \eqref{cs-wgt} will be called a \emph{central series} of $\C$. The number $n$ will be called the \emph{length} of the central series \eqref{cs-wgt}. 

 The groups $G_1, \dots, G_n$ will be called the \emph{factors} of the series \eqref{cs-wgt}.

 Let $\C$ be a weakly group-theoretical fusion category. Two central series $\vect = \C_0, \C_1, \dots , \C_n = \C$ and $\vect = \D_0, \D_1, \dots , \D_m = \C$ of $\C$ will be called \emph{equivalent} if there is a bijection $\{0, \dots, n\}\to  \{0, \dots,  m\}$ such that the corresponding factors are isomorphic.

The central series \eqref{cs-wgt} will be called a \emph{composition series} of $\C$ if the factors are simple groups. 
\end{definition}

We shall show in the next section that any two composition series of a weakly group-theoretical fusion category $\C$ are equivalent. This will allow us to introduce the composition factors of $\C$, which are a Morita invariant of $\C$.

\begin{remark} Suppose that $\C$ is a nilpotent fusion category. As a consequence of the above mentioned Theorem 1.3 of \cite{ENO2}, every series of fusion subcategories $\vect = \C_0 \subseteq \C_{1} \subseteq \dots \subseteq \C_n = \C$ of $\C$ as in \eqref{cs-nil}  with factors $G_1, \dots, G_n$, induces a central series $\vect = \C_0, \C_{1}, \dots, \C_n = \C$ of $\C$ with factors $G_1, \dots, G_n$. 

Furthermore, in view of Lemma \ref{f-simple}, a series of fusion subcategories  as in \eqref{cs-nil} is a composition series of $\C$ in the sense of Definition \ref{def-csnil}, if and only if the induced central series  is a composition series of $\C$ in the sense of Definition \ref{def-cs-wgt}. 

Thus the definition of a composition series of a weakly group-theoretical fusion category extends that of a composition series of a nilpotent fusion category given in Subsection \ref{nilpotent}. 
\end{remark}

\section{Jordan-H\" older theorem}\label{jordan-hoelder}

\begin{lemma}\label{dimp} Let $\C$ be a braided fusion category. Let $p$ be a prime number and suppose that $\E \cong \Rep G$ and $\K \cong \Rep L$ are Tannakian subcategories of $\C$ of dimension $p$ such that $\K \cap \E = \vect$ and $\K \cap \E' = \vect$. Then there exists an equivalence of braided fusion categories
$$\C_G^0 \cong \C_L^0.$$ \end{lemma}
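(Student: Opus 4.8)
The plan is to work inside the de-equivariantization $\C_G$, which is a $G$-crossed braided fusion category with neutral component $\C_G^0$, and to track the image of the second Tannakian subcategory $\K$ under the canonical functor $F\colon \C \to \C_G$, $F(X) = X\otimes A$, where $A$ is the regular algebra attached to $\E \cong \Rep G$. The key observation is that $\E$ and $\E'$ play symmetric roles with respect to $F$: the hypothesis $\K \cap \E = \vect$ will let me control when objects of $\K$ become trivial under $F$, and the hypothesis $\K \cap \E' = \vect$ will force the image $F(\K)$ to lie in a position transverse to the neutral component.

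First I would examine $\KER_F$. Since $F$ is the de-equivariantization functor, $\KER_F = \E$ (the objects killed by tensoring with $A = k^G$ are exactly those in the Tannakian subcategory). Because $\dim \E = \dim \K = p$ is prime and $\K \cap \E = \vect$, the functor $F$ is injective on $\K$ in the strong sense: by Lemma \ref{nd-equiv} applied with the normal tensor functor $F$ and the tensor subcategory $\K$ satisfying $\K \cap \KER_F = \K \cap \E = \vect$, the restriction $F\vert_{\K}\colon \K \to F(\K)$ is an equivalence of tensor categories. Hence $F(\K) \cong \Rep L$ is a Tannakian subcategory of $\C_G$ of dimension $p$, and moreover it is $G$-stable, being the image of a fusion subcategory under the de-equivariantization of a braided subcategory (alternatively, one checks directly that $F(\K)$ is preserved by the crossed $G$-action since $\K$ is a braided, in particular $G$-invariant once we pass through the equivariantization identification). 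Now I would apply Proposition \ref{simple-gstable} to the $G$-stable fusion subcategory $F(\K) \subseteq \C_G$, which contains no proper nontrivial fusion subcategories because it has prime dimension: either $F(\K) \subseteq (\C_G)_e = \C_G^0$, or $F(\K)$ is pointed of prime dimension with $G(F(\K))$ isomorphic to a normal subgroup of $G$. The second alternative would give a nontrivial cyclic normal subgroup $N \le G$ with $F(\K) \cap \C_G^0 = \vect$; I would argue this is incompatible with $\K \cap \E' = \vect$, using that under the equivalence $\C \cong (\C_G)^G$ the M\"uger centralizer $\E'$ corresponds to $(\C_G^0)^G$, so an object of $\K$ whose image lands outside $\C_G^0$ cannot... — here one must be careful, since in fact I expect the correct conclusion to be that $F(\K) \subseteq \C_G^0$: the condition $\K \subseteq \E'$ is what would put $F(\K)$ inside $\C_G^0$, whereas we are given $\K \cap \E' = \vect$. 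So I would instead argue that $\K \cap \E' = \vect$ forces $F(\K)$ to be \emph{transverse} to $\C_G^0$ inside $\C_G$, i.e. to realize a cyclic subgroup $L \cong \mathbb{Z}/p$ mapping isomorphically onto a normal subgroup of $G$; and then $\C_G$ is itself the de-equivariantization of $\C_L$-type data in a way that identifies $(\C_G)^0$ with $(\C_L)^0$.

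The heart of the argument, then, is the following symmetric reformulation. Having obtained that $F(\K) \cong \Rep(\mathbb{Z}/p)$ is a $G$-stable Tannakian subcategory of $\C_G$ whose associated normal subgroup (via Lemma \ref{normal-h}) is all of the copy $L \le G$ it generates, I would de-equivariantize $\C_G$ by $F(\K)$: by the iterated de-equivariantization formalism (Proposition 3.24 of \cite{mueger-crossed} together with the bijection between braided categories containing $\Rep G$ and $G$-crossed braided categories), $(\C_G)_{L} \cong \C_{G}$ de-equivariantized further, and its neutral component is computed to be $\C_L^0$ — because de-equivariantizing $\C$ first by $\E$ and then by the lift of $\K$ is the same, up to the order of the two commuting Tannakian subcategories and the transversality hypotheses $\K\cap\E = \K\cap\E' = \vect$, as de-equivariantizing first by $\K$ and then by the lift of $\E$. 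Running the same computation with the roles of $\E$ and $\K$ (equivalently $G$ and $L$) interchanged yields $\C_L^0$ carrying the analogous further de-equivariantization, and comparing the two gives the desired braided equivalence $\C_G^0 \cong \C_L^0$.

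\textbf{Main obstacle.} The delicate point is the bookkeeping of the two de-equivariantizations and, in particular, verifying that the lift of $\E$ to $\C_L$ (and of $\K$ to $\C_G$) is again a Tannakian subcategory satisfying the transversality needed to iterate the construction, so that the ``diamond'' of de-equivariantizations by $\E$ and $\K$ commutes at the level of neutral components. This requires knowing that $\E$ centralizes $\K$ in $\C$ (both are symmetric, but one needs them to centralize \emph{each other}, which should follow since $\K \subseteq \E'$ would be automatic were $\E$ all of $\C$ — in general I would need $\E \subseteq \K'$, i.e. $\K \subseteq \E'$, which is \emph{not} assumed; so the precise mechanism must instead exploit that $F\vert_{\E'}$ is the de-equivariantization functor onto $\C_G^0$ and that $F(\K)$'s transversality pins down $\C_G^0$ intrinsically as $\C_L^0$). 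Making this last identification precise, rather than the naive ``order of de-equivariantization doesn't matter'' heuristic, is where the real work lies; everything else is an application of Lemma \ref{nd-equiv}, Proposition \ref{simple-gstable}, and the M\"uger--DGNO de-equivariantization dictionary recalled in Section \ref{prels}.
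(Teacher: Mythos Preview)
Your proposal has a genuine gap, and you yourself put your finger on it in the ``Main obstacle'' paragraph. The iterated de-equivariantization strategy --- first by $\E$, then by the image of $\K$, and symmetrically --- only makes sense when $\E$ and $\K$ centralize each other: the formalism of Lemma \ref{ittd} and of \cite[Lemma 4.32]{DGNOI} requires a chain $\E \subseteq \J$ of Tannakian subcategories, which here would force $\K \subseteq \E'$. But the hypothesis of Lemma \ref{dimp} is precisely the opposite, $\K \cap \E' = \vect$, so $\E$ and $\K$ do \emph{not} centralize each other and the ``diamond'' of de-equivariantizations never forms. Your fallback --- that the transversality of $F(\K)$ to $\C_G^0$ ``pins down $\C_G^0$ intrinsically as $\C_L^0$'' --- is not a mechanism; in particular, ``de-equivariantize $\C_G$ by $F(\K)$'' is not well-defined, since $\C_G$ is only $G$-crossed braided and $F(\K)$ sits transversally to the braided part $\C_G^0$. (In fact, in the proof of Theorem \ref{main} the centralizing case is handled by exactly the iteration you sketch, while the non-centralizing case is dispatched by invoking Lemma \ref{dimp} --- so your approach to the lemma is essentially circular.)

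The paper's proof takes a completely different, and much shorter, route that avoids $\C_G$ altogether. Since $G$ and $L$ are cyclic of order $p$, both $\E$ and $\K$ are \emph{pointed} of dimension $p$; together with $\E \cap \K = \vect$ this makes $\E \vee \K$ a pointed braided fusion subcategory of $\C$ of dimension $p^2$. The hypothesis $\K \cap \E' = \vect$ says $\E \vee \K$ is not symmetric, so any Tannakian subcategory has dimension at most $p$, and hence $\E$ and $\K$ are each \emph{maximal} Tannakian in $\E \vee \K$. One then invokes \cite[Theorem 5.11]{DGNOI}, which gives directly $\E'_G \cong \K'_L$ as braided fusion categories; since $\C_G^0 \cong \E'_G$ and $\C_L^0 \cong \K'_L$, the lemma follows. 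The key idea you are missing is to work inside the small pointed subcategory $\E \vee \K$ rather than inside the large crossed category $\C_G$.
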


\begin{proof} Let $\E \vee \K$ denote the fusion subcategory of $\C$ generated by $\E$ and $\K$. Since both $\E$ and $\K$ are pointed fusion categories of dimension $p$ and $\E \cap \K = \vect$, then $\E \vee \K$ is a pointed fusion category of dimension $p^2$. The assumption $\K \cap \E' = \vect$ implies that $\E \vee \K$ is not symmetric. Therefore $\E$ and $\K$ are maximal among Tannakian subcategories of $\E \vee \K$. 

As a consequence of \cite[Theorem 5.11]{DGNOI}, we find  that there is an equivalence of braided fusion categories $\E'_G \cong \K'_L$. This implies the lemma, since $\C_G^0  \cong \E'_G$ and $\C_L^0 \cong \K'_L$ as braided fusion categories. \end{proof}

\begin{proposition}\label{br-equiv} Let $\C$ be a braided fusion category. Suppose $\E \cong \Rep G \subseteq \C$ is a Tannakian subcategory and let $F: \C \to \C_G$ be the canonical tensor functor. Let $\D$ be a fusion subcategory of $\C$ such that $\D\cap \E = \vect$. Then $F$ induces by restriction an equivalence of fusion categories $F\vert_{\D}: \D\to F(\D)$.

Assume in addition that $F(\D) \subseteq \C_G^0$. Then $\D \subseteq \E'$ and $F\vert_{\D}: \D\to F(\D)$ is an equivalence of braided fusion categories.  \end{proposition}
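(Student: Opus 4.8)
The plan is to obtain the first assertion directly from Lemma \ref{nd-equiv}, and then to bootstrap the extra hypothesis $F(\D) \subseteq \C_G^0$ into the statement $\D \subseteq \E'$ so that the braided structure can be controlled via the known behaviour of $F\vert_{\E'}$. First I would recall that the canonical functor $F: \C \to \C_G$, $F(X) = X \otimes A$, is a normal tensor functor with $\KER_F \cong \E \cong \Rep G$; more precisely, the objects of $\KER_F$ are exactly the objects of $\E$, since de-equivariantization sends the Tannakian subcategory to the trivial objects. Hence the hypothesis $\D \cap \E = \vect$ says precisely that $\D \cap \KER_F = \vect$, and Lemma \ref{nd-equiv} immediately gives that $F\vert_{\D}: \D \to F(\D)$ is an equivalence of tensor (fusion) categories. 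This disposes of the first paragraph of the statement.

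For the second assertion I would argue as follows. Assume $F(\D) \subseteq \C_G^0$. The key point is that $\C_G^0 = \E'_G$ as braided fusion categories, i.e.\ $\C_G^0$ is the image of the M\"uger centralizer $\E'$ under $F$, and in fact $F$ restricts to a \emph{dominant} normal braided tensor functor $F\vert_{\E'}: \E' \to \C_G^0$. I want to show that every simple object $X$ of $\D$ already lies in $\E'$. Since $F(X) \in \C_G^0$ and $F\vert_{\E'}$ is dominant onto $\C_G^0$, there is a simple object $Y$ of $\E'$ with $\Hom(F(Y), F(X)) \neq 0$, equivalently $\Hom(\uno, F(Y^* \otimes X)) \neq 0$. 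By normality of $F$, every simple constituent $Z$ of $Y^* \otimes X$ with $\Hom(\uno, F(Z)) \neq 0$ lies in $\KER_F = \E$; but then $X$ is a constituent of $Y \otimes Z$ with $Y \in \E'$ and $Z \in \E \subseteq \E'$ (as $\E$ is Tannakian, hence symmetric, so $\E \subseteq \E'$), and since $\E'$ is a fusion subcategory it follows that $X \in \E'$. (One must check that at least one such constituent $Z$ with $\Hom(\uno,F(Z)) \ne 0$ exists; this follows because $\Hom(\uno, F(Y^*\otimes X)) \ne 0$ forces $\uno$ to appear in $F(Z)$ for some simple constituent $Z$ of $Y^*\otimes X$, as $F$ is exact and $\Hom(\uno,-)$ picks out the trivial part.) Therefore $\D \subseteq \E'$.

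Once $\D \subseteq \E'$, the braiding statement is a formal consequence. The functor $F\vert_{\E'}: \E' \to \C_G^0$ is a braided tensor functor, so its further restriction $F\vert_{\D}: \D \to F(\D)$ is a braided tensor functor; we have already shown it is an equivalence of fusion categories, and a fully faithful essentially surjective braided tensor functor is an equivalence of braided fusion categories. (Here one uses that $F(\D) \subseteq \C_G^0$ inherits the braiding of $\C_G^0$, and that the braiding transported along the equivalence $F\vert_\D$ agrees with it precisely because $F\vert_{\E'}$ is braided.) This completes the proof.

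The step I expect to be the main obstacle is pinning down $\D \subseteq \E'$ from $F(\D) \subseteq \C_G^0$: it requires using both the dominance of $F\vert_{\E'}$ onto $\C_G^0$ and the normality of $F$ in tandem, and one has to be a little careful that the decomposition of $Y^* \otimes X$ into simples really produces a constituent landing in $\KER_F$ rather than merely an object whose image contains $\uno$ as a non-direct summand — but since $F$ is a tensor functor between semisimple categories, $F(Y^*\otimes X)$ is semisimple and $\Hom(\uno, F(Y^*\otimes X)) \ne 0$ does force an actual trivial summand, coming from some simple constituent $Z$ with $Z \in \KER_F$ by normality. Everything else is either a direct appeal to Lemma \ref{nd-equiv} or a routine check that restrictions of braided functors are braided.
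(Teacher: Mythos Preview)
Your proposal is correct and follows essentially the same route as the paper's proof: the first assertion is obtained from Lemma \ref{nd-equiv} using $\KER_F = \E$, and the inclusion $\D \subseteq \E'$ is proved by combining the dominance of $F\vert_{\E'}:\E'\to \C_G^0$ with the normality of $F$ to exhibit each simple $X\in\D$ as a constituent of a tensor product of objects of $\E'$. The only differences are cosmetic (you work with $Y^*\otimes X$ where the paper uses $Y\otimes X^*$, and you take $Y$ simple from the outset), and your extra care in verifying that some simple constituent $Z$ of $Y^*\otimes X$ actually lands in $\KER_F$ is well placed.
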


\begin{proof} The functor $F$ is a normal dominant tensor functor whose kernel is $\KER_F = \E$. By Lemma \ref{nd-equiv}, the restriction $F\vert_{\D}: \D\to F(\D)$ is an equivalence of fusion categories.  

Let us now assume that $F(\D) \subseteq \C_G^0$. Since the restriction $F\vert_{\E'}$ is a braided tensor functor, it will be enough to show that $\D\subseteq \E'$. To see this, we consider the equivalence $F\vert_{\D}: \D\to F(\D)$ guaranteed by the first part of the proposition. 

Let $X$ be a simple object of $\D$. Then $F(X)$ is a simple $G$-equivariant object of $\C_G^0$.  Since the restriction $F\vert_{\E'}: \E' \to \C_G^0$ is dominant, there exists an object $Y$ of $\E'$ such that $F(X)$ is a subobject of $F(Y)$. Therefore $0 \neq \Hom(\uno, F(Y \otimes X^*))$ and it follows that  $\Hom(\uno, F(Z)) \neq 0$ for some simple constituent of $Y \otimes X^*$ in $\C$. Since the functor $F$ is normal, then $F(Z)$ is a trivial object of $\C_G$, that is, $Z \in \KER_F = \E$. 

On the other hand, $\Hom(X, Z^* \otimes Y) \cong \Hom(Z, Y \otimes X^*) \neq 0$. Then $X$ is a simple constituent of $Z^* \otimes Y$ and therefore $X \in \E'$, because $Y \in \E'$ and $Z^* \in \E \subseteq \E'$. This finishes the proof of the proposition.  \end{proof}

\begin{lemma}\label{ittd} Let $\C$ be a braided fusion category and let $G_1$ and $L$ be finite groups. Suppose $\C$ contains Tannakian subcategories $\E \subseteq \J$, with $\J \cong \Rep G_1$, $\E \cong \Rep G_1/L$. Let $G = G_1/L$. Then $\C^0_{G}$ contains a Tannakian subcategory equivalent to $\J_G \cong \Rep L$ and there is an equivalence of braided fusion categories
$$\C_{G_1}^0 \cong (\C^0_{G})^0_{L}.$$
\end{lemma}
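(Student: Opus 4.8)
The plan is to identify the asserted Tannakian subcategory explicitly as the image of $\J$ under de-equivariantization by $\E$, and then to establish the braided equivalence by realizing both of its sides as de-equivariantizations of the M\"uger centralizer $\J'$ of $\J$ in $\C$.

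First I would record some containments. Since $\E\subseteq\J$ and $\J$ is symmetric, every object of $\J$ centralizes every object of $\E$; hence $\J\subseteq\E'$, and likewise $\E\subseteq\J'$, while $\J'\subseteq\E'$ because taking M\"uger centralizers reverses inclusions. Thus the canonical tensor functor $F:\C\to\C_G$ restricts to a dominant normal braided tensor functor $F\vert_{\E'}:\E'\to\C_G^0$, and $F(\J)$, which I denote $\J_G$, is a fusion subcategory of $\C_G^0$. Its restriction $F\vert_\J:\J\to\J_G$ is a dominant normal tensor functor with $\KER_{F\vert_\J}=\J\cap\E=\E$, so $\J_G$ realizes the de-equivariantization of $\J\cong\Rep G_1$ by its Tannakian subcategory $\E\cong\Rep(G_1/L)$. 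Since the de-equivariantization of $\Rep\Gamma$ by a Tannakian subcategory $\Rep(\Gamma/M)$ is $\Rep M$ as a braided fusion category, we get $\J_G\cong\Rep L$, a Tannakian subcategory of $\C_G^0$; this proves the first assertion.

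By \cite[Proposition 3.24]{mueger-crossed} there are equivalences of braided fusion categories $\C_G^0\cong\E'_G$ and $\C^0_{G_1}\cong\J'_{G_1}$, where $\E',\J'$ denote M\"uger centralizers in $\C$; moreover $\E\subseteq Z_2(\E')$ and $\J\subseteq Z_2(\J')$, so these de-equivariantizations carry trivial gradings and coincide with their own neutral components. Put $\K:=F(\J')=(\J')_\E$, a braided fusion subcategory of $\C_G^0$. The key step is to show that $\K$ is the M\"uger centralizer of $\J_G$ in $\C_G^0$. The inclusion $\K\subseteq(\J_G)'$ is immediate, as $F$ is braided and $\J'$ centralizes $\J$. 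For the reverse inclusion, let $W$ be a simple object of $\C_G^0$ centralizing $\J_G$. By dominance of $F\vert_{\E'}$, $W$ is a constituent of $F(V)$ for some simple $V\in\E'$; all simple constituents of $F(V)$ lie in the $G$-orbit of $W$ under the crossed action, and since that action is by braided autoequivalences of $\C_G^0$ and $\J_G=F(\J)$ is $G$-stable, each such constituent --- hence $F(V)$ itself --- centralizes $\J_G$. As $F\vert_{\E'}$ is a faithful braided tensor functor, $c_{V,Y}c_{Y,V}$ maps to the identity for every $Y\in\J$, forcing $c_{V,Y}c_{Y,V}=\id$; thus $V$ centralizes $\J$ in $\E'$, and the centralizer of $\J$ in $\E'$ is $\J'\cap\E'=\J'$, so $V\in\J'$ and $W$ is a constituent of $F(V)\in\K$. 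Hence $(\J_G)'=\K$.

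Finally, applying \cite[Proposition 3.24]{mueger-crossed} to the Tannakian subcategory $\J_G\cong\Rep L$ of $\C_G^0$ yields $(\C_G^0)^0_L\cong\bigl((\J_G)'\bigr)_{\J_G}=\bigl((\J')_\E\bigr)_{\J_G}$ as braided fusion categories, and it remains to identify this with $(\J')_\J=\J'_{G_1}\cong\C^0_{G_1}$. This is the transitivity of de-equivariantization: at the level of the defining algebras, $(\J')_\E$ is the category of $\Oo(G_1/L)$-modules in $\J'$, the subcategory $\J_G$ corresponds to $\Oo(G_1)$ viewed as an algebra in $(\J')_\E$ through the inclusion $\Oo(G_1/L)\hookrightarrow\Oo(G_1)$, and a module over this algebra in $(\J')_\E$ is the same datum as an $\Oo(G_1)$-module in $\J'$ --- with all braidings inherited from that of $\J'$. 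I expect the main obstacle to be precisely this last identification: running the transitivity argument while keeping track of the braided structures and of which grading component one works in, so that every equivalence in the chain is genuinely one of braided fusion categories.
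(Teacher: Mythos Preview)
Your proof is correct and follows the same three-step skeleton as the paper's: identify $\J_G\cong\Rep L$ inside $\C_G^0\cong\E'_G$, show that $(\J_G)'=(\J')_G$ as subcategories of $\C_G^0$, and then invoke transitivity of de-equivariantization to obtain $((\J')_G)_L\cong(\J')_{G_1}$. The paper simply cites \cite[Corollary~4.31, Proposition~4.30(iii), Lemma~4.32]{DGNOI} for these three facts respectively, whereas you reprove them by hand; in particular, the braided transitivity step you flag as the main obstacle is exactly \cite[Lemma~4.32]{DGNOI}.
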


\begin{proof}  By \cite[Corollary 4.31]{DGNOI}, $\J_G \cong \Rep L$ is a Tannakian subcategory of $\E'_G \cong \C^0_G$. Moreover, since $\J'$ is a braided fusion category over $G_1$, then \cite[Lemma 4.32]{DGNOI} gives an equivalence of braided fusion categories 
\begin{equation}\label{i}(\J'_G)_{L} \cong \J'_{G_1}.\end{equation}

On the other hand, $\E'$ is a braided fusion category over $\E$ containing $\J$. 
Observe that $\J' \subseteq \E'$ and therefore $\J'$ coincides with the centralizer of $\J$ in $\E'$. Since $\E \subseteq \J$, it follows from \cite[Proposition 4.30 (iii)]{DGNOI} that 
\begin{equation}\label{ii}\J'_G = (\J_G)',\end{equation}
as fusion subcategories of $\E'_G \cong \C^0_G$.

Combining \eqref{i} and \eqref{ii}, we get an equivalence of braided fusion categories 
$$(\J_G)'_{L} \cong \J'_{G_1}.$$ This implies the lemma, since $(\C^0_G)^0_{L} \cong (\J_G)'_{L}$ and $\C_{G_1}^0 \cong \J'_{G_1}$. \end{proof}

\begin{theorem}[Jordan-H\" older theorem for weakly group-theoretical fusion categories]\label{main} Let $\C$ be a weakly group-theoretical fusion category. Then two composition series of $\C$ are equivalent. \end{theorem}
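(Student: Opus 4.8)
The plan is to mimic the classical Schreier refinement argument from group theory, with the operation ``pass to the de-equivariantization by a Tannakian subcategory'' playing the role of ``pass to a subquotient.'' The key invariant attached to a weakly group-theoretical fusion category is, by the results recalled in Section \ref{prels}, the braided fusion category $\Z(\C)$ together with the datum of how it is built up from $\vect$ by successive de-equivariantizations; concretely, a central series of $\C$ is the same as a chain of braided fusion categories $\vect = \Z(\C_0), \Z(\C_1), \dots, \Z(\C_n) = \Z(\C)$ in which $\Z(\C_{i-1})$ is obtained from $\Z(\C_i)$ as $(\E_i')_{G_i}$ for a Tannakian subcategory $\E_i \cong \Rep G_i$ of $\Z(\C_i)$. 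So the whole problem takes place inside $\Z(\C)$, and by \cite[Theorem 3.1]{ENO2} two fusion categories with equivalent Drinfeld centers have the same central series; thus it suffices to prove a Jordan--H\"older statement for such chains of braided fusion categories.

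The heart of the argument is a Zassenhaus-type (butterfly) lemma. Given a braided fusion category $\Bb$ and two Tannakian subcategories $\E \cong \Rep G$ and $\K \cong \Rep L$, I want to produce from the pair of one-step de-equivariantizations $\Bb \rightsquigarrow \Bb_G^0$ and $\Bb \rightsquigarrow \Bb_L^0$ a common refinement: inside $\Bb_G^0$ there should be a Tannakian subcategory whose further de-equivariantization is equivalent, as a braided fusion category, to a corresponding de-equivariantization inside $\Bb_L^0$, with the ``intermediate factor groups'' matching up. Lemma \ref{ittd} already handles the nested case $\E \subseteq \J$, giving $(\Bb_G^0)_L^0 \cong \Bb_{G_1}^0$ where $G = G_1/L$; this is exactly the statement that a de-equivariantization by $\Rep(G_1/L)$ followed by one by $\Rep L$ equals the de-equivariantization by $\Rep G_1$, i.e.\ the analogue of $(A/B)/(C/B) \cong A/C$ refinements. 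Lemma \ref{dimp} handles the ``coprime'' or rather ``transverse'' case: if $\E,\K$ have prime dimension $p$ with $\K \cap \E = \K \cap \E' = \vect$, then $\Bb_G^0 \cong \Bb_L^0$ outright. The general Zassenhaus lemma I would assemble by taking $\J = \E \vee \K$ (or rather a suitable Tannakian subcategory generated by $\E$ and $\K$ — one must check the join of two Tannakian subcategories with transverse intersections is again Tannakian, using \cite[Theorem 5.11]{DGNOI} as in Lemma \ref{dimp}), then applying Lemma \ref{ittd} twice, once relative to $\E \subseteq \J$ and once relative to $\K \subseteq \J$, and combining with Proposition \ref{br-equiv} to identify images under the de-equivariantization functors.

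From the Zassenhaus lemma the Schreier refinement theorem follows formally: given two central series of $\C$, interpolate between consecutive terms of each using the other series' Tannakian data, obtaining two refined central series with pairwise-equivalent factors (the factor groups being the groups attached to the intermediate Tannakian subcategories, which by the butterfly lemma come in matching pairs up to isomorphism). Finally, a composition series is one whose factors are simple groups; such a series admits no proper refinement, since a proper refinement would split some factor group $G_i$ as a nontrivial extension (via Proposition \ref{normal-gding}-type reasoning inside the crossed braided category, cf.\ Proposition \ref{simple-gstable} and Lemma \ref{normal-h}), contradicting simplicity. Hence any two composition series are common refinements of each other, so by Schreier they are equivalent, i.e.\ have the same factors with multiplicities.

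The main obstacle I anticipate is the Zassenhaus lemma itself — specifically, making sense of and proving the braided equivalence of the two ``crossed'' intermediate de-equivariantizations when $\E$ and $\K$ are in general position (neither contained in the other, neither centralizing the other). The nested case (Lemma \ref{ittd}) and the transverse prime case (Lemma \ref{dimp}) are the two extremes, and the subtlety is that in general $\E \vee \K$ need not be Tannakian or even symmetric, so one cannot simply de-equivariantize by it; one has to work with $\E \cap \K'$ and the images of $\E$, $\K$ in each other's de-equivariantizations, keeping careful track of which centralizers survive (this is where Proposition \ref{br-equiv}'s conclusion ``$\D \subseteq \E'$'' is essential). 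Handling the bookkeeping of factor groups — showing the intermediate groups that appear really are subquotients of both $G_i$'s that pair off — is the place where the proof will require the most care.
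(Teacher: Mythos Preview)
Your plan follows the classical Schreier--refinement route, whereas the paper proves the theorem by a direct induction comparing only the \emph{top} factors of the two composition series. The observation you are missing is that, because $G=G_n$ and $L=L_m$ are \emph{simple}, a general Zassenhaus lemma is never needed: either $\E=\K$, or $\E\cap\K=\vect$. In the latter case one looks at the image $F(\K)$ inside the $G$-crossed braided category $\Z(\C)_G$; since $F(\K)$ is $G$-stable and has no proper nontrivial fusion subcategory, Proposition~\ref{simple-gstable} forces exactly the dichotomy you called the two ``extremes.'' Either $F(\K)\subseteq\Z(\C)_G^0$, whence Proposition~\ref{br-equiv} gives $\K\subseteq\E'$, so $\E\vee\K\cong\Rep(G\times L)$ \emph{is} Tannakian and Lemma~\ref{ittd} (applied once from each side) yields a common braided de-equivariantization $\Z(\C_{n-1})_L^0\cong\Z(\C)_{G\times L}^0\cong\Z(\D_{m-1})_G^0$; or $F(\K)$ is pointed of prime order isomorphic to a normal subgroup of $G$, forcing $G\cong L\cong\mathbb Z_p$, and Lemma~\ref{dimp} gives $\Z(\C_{n-1})\cong\Z(\D_{m-1})$ directly. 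Either way induction on the shorter series finishes the argument.

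So the obstacle you anticipate---a butterfly lemma for two Tannakian subcategories in general position, where $\E\vee\K$ need not even be symmetric---is simply not confronted in the paper; simplicity of the top factors collapses the general case into the two special ones already proved. Your Schreier strategy would, if carried out, yield the stronger statement that any two central series admit equivalent refinements, but the lemmas available (in particular Lemma~\ref{dimp}, stated only for prime dimension) do not obviously suffice for that, and you have given no mechanism to handle the non-symmetric join. The paper's inductive argument is both shorter and stays entirely within what has been established; the price is that it proves Jordan--H\"older only for composition series, not a full Schreier refinement theorem.
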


\begin{proof} Let \begin{equation}\label{cs-1}\vect = \C_0, \C_1, \dots , \C_n = \C,\end{equation} and \begin{equation}\label{cs-2}\vect = \D_0, \D_1, \dots , \D_m = \C,\end{equation} be two composition series of $\C$, with factors $G_1, \dots, G_n$, and $L_1, \dots, L_m$, respectively. Thus, 
for all $1 \leq i \leq n$, the Drinfeld center $\Z(\C_i)$ contains a Tannakian subcategory $\E_i \cong \Rep G_i$ such that $(\E_i')_{G_i}\cong \Z(\C_{i-1})$ as braided fusion categories, and for all $1 \leq j \leq m$, $\Z(\D_j)$ contains a Tannakian subcategory $\K_j \cong \Rep L_j$ and $(\K_j')_{L_j}\cong \Z(\D_{j-1})$ as braided fusion categories. The groups $G_1, \dots, G_n$ and $L_1, \dots, L_m$, are simple by assumption. 

We need to show that $n = m$ and the sequence $G_1, \dots, G_n$ is a permutation of the sequence $L_1, \dots, 
L_m$.  

\medbreak Let $G: = G_n$ and $L: = L_m$. So that $\E = \E_n \cong \Rep G$ and $\K = K_m \cong \Rep L$ are Tannakian subcategories of $\Z(\C)$ and 
the de-equivariantization $\Z(\C)_G$ (respectively, $\Z(\C)_L$) is a faithful $G$-graded extension of $\Z(\C)^0_G \cong \Z(\C_{n-1})$ (respectively, a faithful $L$-graded extension of $\Z(\C)^0_L \cong \Z(\D_{n-1})$).

\medbreak 
Since $G$ and $L$ are simple groups, then $\E$ and $\K$ contain no nontrivial proper fusion subcategories. Therefore we must have $\E\cap \K = \E$ or $\E\cap \K = \vect$. The first possibility implies that $\E = \K$. Hence $G \cong L$ and $\Z(\C_{n-1}) \cong \Z(\C)_G^0 = \Z(\C)_L^0 \cong \Z(\D_{m-1})$ as braided fusion categories. The results follows in this case by induction.

\medbreak We may thus assume that $\E\cap \K = \vect$. Let $F: \Z(\C) \to \Z(\C)_G$ be the canonical tensor functor. 
In view of  Proposition \ref{br-equiv}, $F$ induces by restriction an equivalence of fusion categories $F\vert_{\K}: \K \to F(\K)$. Hence $F(\K)$ contains no nontrivial proper fusion subcategories. Moreover, since $F(\K) \subseteq \Z(\C)_G$ is a $G$-stable fusion subcategory, it follows from Proposition \ref{simple-gstable} that either $F(\K) \subseteq \Z(\C)_G^0$ or $F(\K)$ is pointed of prime dimension and the group $G(F(\K))$ of its invertible objects is isomorphic to a normal subgroup of $G$. 

The last possibility implies that $G$ and $L$ are isomorphic cyclic groups of prime order. Note that, since $F(\K)$ is not contained in $\Z(\C)_G^0$, then $\K \cap \E' = \vect$. Lemma \ref{dimp} implies that $\Z(\C_{n-1}) \cong \Z(\C)_G^0 \cong \Z(\C)_L^0 \cong \Z(\D_{m-1})$ as braided fusion categories. Then the theorem follows by induction. 

\medbreak Let us consider the remaining possibility, namely, the case where $F(\K) \subseteq \Z(\C)_G^0$. In this case, Proposition \ref{br-equiv} implies that $\K \subseteq \E'$. 
Therefore the Tannakian subcategories $\E$ and $\K$ centralize each other in $\Z(\C)$. 
It follows from \cite[Proposition 7.7]{muegerII} that the fusion subcategory $\J = \E \vee \K$ generated by $\E$ and $\K$ is equivalent, as a braided fusion category, to the Deligne tensor product $\E \boxtimes \K$. We thus obtain that $\J \cong \Rep (G \times L)$ is a Tannakian subcategory of $\Z(\C)$ containing $\E$ and $\K$. 

\medbreak Since $\Z(\C_{n-1}) \cong \Z(\C)_G^0$ and $\Z(\D_{m-1}) \cong \Z(\C)_L^0$, it follows from Lemma \ref{ittd} that 
$\Z(\C)_G^0 \cong \Z(\C_{n-1})$ contains a Tannakian subcategory equivalent to $\J_G \cong \Rep L$,  $\Z(\C)_L^0 \cong \Z(\D_{m-1})$ contains a Tannakian subcategory equivalent to $\J_L \cong \Rep G$, and 
there are equivalences of braided fusion categories
$$\Z(\C_{n-1})_L^0 \cong \Z(\C)_{G\times L}^0  \cong \Z(\D_{m-1})_G^0.$$ Let $S_1, \dots, S_r$, $r \geq 1$, be the composition factors of a central series of $\Z(\C)_{G\times L}^0$.

An inductive argument applied, respectively, to the weakly group-theoreti\-cal fusion categories $\C_{n-1}$ and $\D_{m-1}$ implies on the one hand, that $r+1 = n-1$ and 
$L = L_m, S_1, \dots, S_r$ is a permutation of $G_1, \dots, G_{n-1}$, and on the other hand, that $r+1 = m-1$ and 
$G = G_n, S_1, \dots, S_r$ is a permutation of $L_1, \dots, L_{m-1}$. Hence $n = m$ and $G_1, \dots, G_{n-1}, G_n$ is a permutation of $L_1, \dots, L_{m-1}, L_m$. This finishes the proof of the theorem. 
\end{proof}

\section{Composition factors and length}\label{examples}

In view of Theorem \ref{main}, the composition factors and the length of a composition series are independent of the choice of the series. These will be called the \emph{composition factors} and the \emph{length} of $\C$, respectively. By construction, they are a Morita invariant of $\C$, that is, if $\D$ is a fusion category Morita equivalent to $\C$, then $\C$ and $\D$ have the same length and the same composition factors.

It is immediate from the definitions that a weakly group-theoretical fusion category is solvable if and only if its composition factors are cyclic groups of prime order.

\subsection{Group extensions and equivariantizations} Let $G$ be a finite group and let $\D$ be a group-theoretical fusion category. If $\C$ is a $G$-extension or a $G$-equivariantization of $\D$, then $\C$ is also weakly group-theoretical. 

Suppose that $G_1, \dots, G_n$ are the composition factors of $G$. It follows from Proposition \ref{normal-gding} that there is a series of fusion subcategories 
$$\D = \D_0 \subseteq \D_1 \subseteq \dots \subseteq \D_n = \C,$$  
such that $\D_i$ is a $G_i$-extension of $\D_{i-1}$, for all $i = 1, \dots, n$.

Therefore we obtain:

\begin{corollary} Let $G$ be a finite group with composition factors $G_1, \dots, G_n$ and let $\D$ be a group-theoretical fusion category with composition factors $S_1, \dots, S_m$. Suppose that $\C$ is a $G$-extension or a $G$-equivariantization of $\D$. Then the composition factors of $\C$ are $G_1, \dots, G_n, S_1, \dots, S_m$. In particular, the length of $\C$ equals the sum of the lengths of $\D$ and of $G$.
\end{corollary}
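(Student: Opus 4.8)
The plan is to reduce everything to the structure already developed for weakly group-theoretical fusion categories, using that group extensions and equivariantizations are instances of the series appearing in the characterization \eqref{cs-wgt}. First I would recall that, since $\C$ is a $G$-extension of $\D$ (the equivariantization case being handled identically, or by passing to the Morita equivalent extension via \cite[Theorem 1.3]{ENO2} applied to the trivial grading together with the $\Rep G$ subcategory of $\Z(\C)$), Proposition \ref{normal-gding} applied to a composition series $\{e\} = G_{(0)} \subseteq G_{(1)} \subseteq \dots \subseteq G_{(n)} = G$ of $G$ yields a chain of fusion subcategories $\D = \D_0 \subseteq \D_1 \subseteq \dots \subseteq \D_n = \C$ in which $\D_i$ is a $G_i$-extension of $\D_{i-1}$, with $G_i = G_{(i)}/G_{(i-1)}$ simple. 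This is exactly the statement recorded in the paragraph preceding the corollary.

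Next I would invoke \cite[Theorem 1.3]{ENO2} (as already used in Section \ref{wgt}) to convert this chain of honest group extensions into a segment of a central series: for each $i$, the Drinfeld center $\Z(\D_i)$ contains a Tannakian subcategory $\E_i \cong \Rep G_i$ with $(\E_i')_{G_i} \cong \Z(\D_{i-1})$ as braided fusion categories, because $\D_i$ is a $G_i$-extension of $\D_{i-1}$. Since $\D = \D_0$ is group-theoretical, hence weakly group-theoretical, it admits a composition series $\vect = \B_0, \B_1, \dots, \B_m = \D$ with simple factors $S_1, \dots, S_m$. Concatenating this series of $\D$ with the chain $\D_0, \D_1, \dots, \D_n = \C$ produces a series $\vect = \B_0, \dots, \B_m = \D = \D_0, \D_1, \dots, \D_n = \C$; one checks directly from Definition \ref{def-cs-wgt} that this concatenation is again a central series of $\C$, with factors $S_1, \dots, S_m, G_1, \dots, G_n$, all of which are simple. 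Hence it is a composition series of $\C$.

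Finally, by Theorem \ref{main} the composition factors of $\C$ are independent of the chosen composition series, so they are precisely the multiset $S_1, \dots, S_m, G_1, \dots, G_n$, i.e.\ the composition factors of $\D$ together with those of $G$, which is the assertion. The length of $\C$ is then $m + n$, the sum of the lengths of $\D$ and of $G$. The only genuinely delicate point is verifying that the concatenation of two central series is again a central series in the sense of Definition \ref{def-cs-wgt}: one must ensure the "joining" step at the index where $\B_m = \D = \D_0$ carries no hidden obstruction. But this is immediate, since a central series is just a finite sequence of fusion categories each of which arises from its predecessor by a Tannakian de-equivariantization of a M\"uger centralizer inside a Drinfeld center; juxtaposing two such sequences that agree at the junction (here both equal to $\D$) trivially preserves this property step by step. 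So there is no serious obstacle; the corollary is a formal consequence of Theorem \ref{main}, Proposition \ref{normal-gding}, and \cite[Theorem 1.3]{ENO2}.
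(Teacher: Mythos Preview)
Your treatment of the $G$-extension case is correct and coincides with the paper's: the chain $\D = \D_0 \subseteq \dots \subseteq \D_n = \C$ from Proposition~\ref{normal-gding} is exactly the content of the paragraph preceding the corollary, and your concatenation-plus-Theorem~\ref{main} argument is how the paper implicitly reads off the composition factors.

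The weak point is the equivariantization case. Your parenthetical ``handled identically, or by passing to the Morita equivalent extension via \cite[Theorem 1.3]{ENO2} applied to the trivial grading together with the $\Rep G$ subcategory of $\Z(\C)$'' does not work as stated. An equivariantization $\C = \D^G$ is \emph{not} a $G$-extension of $\D$, so Proposition~\ref{normal-gding} does not apply directly; and invoking \cite[Theorem 1.3]{ENO2} in the direction you need requires already knowing that $\Z(\C)$ contains a Tannakian copy of $\Rep G$ whose de-equivariantized centralizer is $\Z(\D)$, which is precisely the thing to be established. The paper closes this gap by producing the Morita equivalence explicitly: Tambara's crossed product $\D \rtimes G$ is a bona fide $G$-extension of $\D$, and by \cite[Proposition 3.2]{nik} the module category $\D$ over $\D\rtimes G$ satisfies $(\D\rtimes G)^*_\D \cong \D^G = \C$. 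Since composition factors are Morita invariant by construction, this reduces the equivariantization case to the already-settled extension case. Once you insert this crossed-product step, your proof is complete and agrees with the paper's.
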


\begin{proof} In view of the argument above, it will be enough to show the statement on equivariantizations. So assume that the group $G$ acts on the tensor category $\D$ by tensor autoequivalences, so that $\C \cong \D^G$. 
Consider the crossed product (or semi-direct product) fusion category $\D \rtimes G$ constructed by Tambara in \cite{tambara}. The category $\D \rtimes G$ is a $G$-extension of $\D$. The category $\D$ has a natural structure of $\D \rtimes G$-module category such that $(\D \rtimes G)^*_\D \cong \D^G \cong \C$ \cite[Proposition 3.2]{nik}. Then $\C$ is Morita equivalent to $\D \rtimes G$ and therefore they have the same composition factors and the same length. This proves the corollary. \end{proof}

\subsection{Nilpotent semisimple Hopf algebras} A semisimple Hopf algebra $H$ is called \emph{nilpotent} if the category $\Rep H$ is nilpotent. 

Let $H$ be a semisimple Hopf algebra and let $\C = \Rep H$ be the fusion category of its finite dimensional representations. It follows from \cite[Theorem 3.8]{gel-nik} that  faithful gradings $\C = \oplus_{g \in G}\C_g$ of $\C$ correspond to central exact sequences of Hopf algebras $k \toto k^G \toto H \toto \overline H \toto k$, such that $\C_e = \Rep \overline H$.  

Combining this with the results in \cite{jh-hopf}, we obtain:

\begin{corollary} Let $G_1, \dots, G_n$ be the composition factors of the category $\Rep H$. Then the composition factors of the Hopf algebra $H$ are the dual group algebras $k^{G_1}, \dots, k^{G_n}$. \qed 
\end{corollary}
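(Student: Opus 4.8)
The plan is to translate a composition series of the nilpotent fusion category $\Rep H$ into an iterated central extension of Hopf algebras and to read off the composition factors of $H$ from the successive central kernels. Since $H$ is nilpotent by hypothesis, $\Rep H$ is a nilpotent fusion category, and by Lemma \ref{f-simple} the groups $G_1, \dots, G_n$ are realized by a composition series $\vect = \C_0 \subseteq \C_1 \subseteq \dots \subseteq \C_n = \Rep H$ in which each $\C_i$ is a $G_i$-extension of $\C_{i-1}$ with $G_i$ simple. Thus $\C_{i-1}$ is the neutral component of a faithful $G_i$-grading of $\C_i$, and each $\C_{i-1}$ is itself nilpotent, being exhibited as an iterated group extension of $\vect$ by the truncated series $\vect = \C_0 \subseteq \dots \subseteq \C_{i-1}$.

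First I would apply \cite[Theorem 3.8]{gel-nik} at the top of the series: the faithful $G_n$-grading of $\Rep H = \C_n$ with neutral component $\C_{n-1}$ corresponds to a central exact sequence of Hopf algebras
$$k \toto k^{G_n} \toto H \toto H_{n-1} \toto k, \qquad \Rep H_{n-1} \cong \C_{n-1}.$$
As a quotient of the nilpotent semisimple Hopf algebra $H$ with $\Rep H_{n-1} \cong \C_{n-1}$ nilpotent, $H_{n-1}$ is again nilpotent and semisimple, so the theorem applies to the faithful $G_{n-1}$-grading of $\C_{n-1}$ with neutral component $\C_{n-2}$, giving $k \toto k^{G_{n-1}} \toto H_{n-1} \toto H_{n-2} \toto k$. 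Iterating, I obtain a chain of surjections of Hopf algebras
$$H = H_n \twoheadrightarrow H_{n-1} \twoheadrightarrow \dots \twoheadrightarrow H_0 = k,$$
where $H_0 = k$ because $\C_0 = \vect = \Rep k$, and each map $H_i \twoheadrightarrow H_{i-1}$ sits in a central exact sequence with kernel the dual group algebra $k^{G_i}$.

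Next I would identify this chain as a composition series of $H$ in the sense of \cite{jh-hopf}, whose factors are $k^{G_1}, \dots, k^{G_n}$. The point to check is that each $k^{G_i}$ is a simple Hopf algebra. Because $k^{G_i}$ is commutative, its adjoint action is trivial and every Hopf subalgebra is automatically normal; moreover the Hopf subalgebras of $k^{G_i}$ are exactly the subalgebras $k^{Q}$ attached to quotient groups $G_i \twoheadrightarrow Q$. Since $G_i$ is simple it has no proper nontrivial quotient, so $k^{G_i}$ has no proper nontrivial normal Hopf subalgebra and is therefore simple. Consequently the chain above admits no proper refinement and is a composition series of $H$, and the Jordan-H\"older theorem for Hopf algebra extensions of \cite{jh-hopf} shows that its factors, namely $k^{G_1}, \dots, k^{G_n}$, are the composition factors of $H$.

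The main obstacle is essentially one of matching frameworks: one must confirm that the exact-sequence notion of composition series and of factor used in \cite{jh-hopf} agrees with the chain of central extensions produced here (so that the factors are the kernels $k^{G_i}$ rather than any cokernel-type data), and that the iterated application of \cite[Theorem 3.8]{gel-nik} is legitimate, i.e. that each intermediate quotient $H_{i-1}$ is again a nilpotent semisimple Hopf algebra with $\Rep H_{i-1} \cong \C_{i-1}$. Once these compatibilities are in place, the structural translation ``$k^{G_i}$ is simple if and only if $G_i$ is a simple group'' is what makes the Hopf-algebraic composition factors come out as the dual group algebras of the group-theoretic composition factors of $\Rep H$.
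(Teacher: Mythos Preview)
Your proposal is correct and follows essentially the same approach as the paper: the paper's entire argument is the sentence preceding the corollary together with the phrase ``Combining this with the results in \cite{jh-hopf}'', and you have simply spelled out what that combination entails, namely iterating \cite[Theorem 3.8]{gel-nik} down the composition series of $\Rep H$ to produce a chain of central exact sequences with simple kernels $k^{G_i}$ and then invoking the Jordan--H\"older theorem of \cite{jh-hopf}. Your caveats about matching frameworks are prudent but do not indicate a genuine gap; they are exactly the routine checks implicit in the paper's one-line justification.
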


\subsection{Group-theoretical fusion categories}\label{gttic} Let $G$ be a finite group and let $\omega \in H^3(G, k^*)$. It follows from the discussion in Example \ref{pointed} that the composition factors of $\C(G, \omega)$ are the composition factors of $G$. 
Since the category $\Rep G$ is Morita equivalent to $\C(G, 1)$, then we obtain:

\begin{corollary} Let $G$ be a finite group. Then the composition factors of the category $\Rep G$ are exactly the composition factors of $G$ counted with multiplicities.

In particular, if $L$ is a group such that the categories $\Rep G$ and $\Rep L$ are Morita equivalent, then $G$ and $L$ have the same length and the same composition factors counted with  multiplicities. 
\end{corollary}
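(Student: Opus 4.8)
The plan is to deduce this corollary directly from Theorem \ref{main}, from the Morita invariance it provides, and from Example \ref{pointed}, with essentially no new work. First I would recall the well-known fact, noted earlier in the excerpt, that $\Rep G$ is Morita equivalent to the pointed fusion category $\C(G, 1)$ (this is the trivial-cocycle case of the equivalence between $\Rep G$ and the category of $G$-graded vector spaces, realized via the regular module category; it is exactly the instance used just before the statement). Since composition factors and length are Morita invariants by Theorem \ref{main} and the discussion opening Section \ref{examples}, it follows that $\Rep G$ and $\C(G, 1)$ have the same length and the same composition factors counted with multiplicities.

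Next I would invoke Example \ref{pointed} with $\omega = 1$: choosing a composition series $\{e\} = G_{(0)} \subseteq \dots \subseteq G_{(n)} = G$ of the group $G$ with simple factors $G_i = G_{(i)}/G_{(i-1)}$, the series $\vect = \C_0 \subseteq \dots \subseteq \C_n = \C(G,1)$ with $\C_i = \C(G_{(i)}, 1)$ is a composition series of $\C(G,1)$ in the sense of Definition \ref{def-csnil}, with factors $G_1, \dots, G_n$. By the remark relating composition series of nilpotent fusion categories (Subsection \ref{nilpotent}) to central composition series (Definition \ref{def-cs-wgt}) — namely, that such a series of fusion subcategories induces a central series with the same factors, and is a composition series in one sense iff the induced one is — the composition factors of $\C(G,1)$ as an invariant are precisely the composition factors of the group $G$, counted with multiplicities. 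Combining this with the previous paragraph yields the first assertion: the composition factors of $\Rep G$ are exactly those of $G$.

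For the final "in particular" statement I would simply note that if $\Rep G$ and $\Rep L$ are Morita equivalent, then by Morita invariance they share the same length and the same composition factors (with multiplicities); applying the first assertion to each of $G$ and $L$ identifies these common invariants with the length and composition factors of $G$ and of $L$ respectively, so $G$ and $L$ must have the same length and the same composition factors counted with multiplicities.

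I do not expect any genuine obstacle here; the entire content has been front-loaded into Theorem \ref{main}, into Example \ref{pointed}, and into the classification of pointed fusion categories. The only point requiring a modicum of care is making explicit that the composition series of $\C(G,1)$ exhibited in Example \ref{pointed} does compute the Morita-invariant composition factors introduced after Theorem \ref{main} — i.e.\ that it is a composition series in the sense of Definition \ref{def-cs-wgt}, not merely in the nilpotent sense of Definition \ref{def-csnil} — but this is precisely the compatibility recorded in the remark at the end of Subsection \ref{wgt}.
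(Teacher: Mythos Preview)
Your proposal is correct and follows exactly the paper's own argument: the paper simply notes, immediately before the corollary, that Example \ref{pointed} computes the composition factors of $\C(G,\omega)$ as those of $G$, and that $\Rep G$ is Morita equivalent to $\C(G,1)$, whence Morita invariance of the composition factors (from Theorem \ref{main}) gives the result. Your added care about the compatibility between Definitions \ref{def-csnil} and \ref{def-cs-wgt} via the remark at the end of Section \ref{wgt} is appropriate and matches the paper's implicit reasoning.
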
 

\medbreak Furthermore, let $\C$ be a group-theoretical fusion category, so that $\C$ is Morita equivalent to a fusion category $\C(G, \omega)$. Then the composition factors of $\C$ are the exactly the composition factors of $G$.

\medbreak As a consequence of this, consider the case $\C = \Rep H$,  where $H = k^\Gamma {}^{\tau}\#_{\sigma} kF$ is a bicrossed product semisimple Hopf algebra associated to a matched pair of finite groups $(F, \Gamma)$, and $(\sigma, \tau) \in \Opext(k^\Gamma, kF)$ is a pair of compatible cocycles; see \cite{mk-ext}. In other words, $H$ is a Hopf algebra that fits into an abelian exact sequence $k \toto k^\Gamma \toto H \toto kF \toto k$.

\medbreak The matched pair $(F, \Gamma)$ has an associated group, denoted $F\bowtie \Gamma$, which is defined on the set $F \times \Gamma$ in terms of the compatible actions $\Gamma \overset{\triangleleft}\longleftarrow \Gamma \times F \overset{\triangleright}\longrightarrow \Gamma$ in the form
$$(x, s) (y, t) = (x (s \triangleright y), (s \triangleleft y) t), \qquad x, y \in F, \; s, t \in \Gamma.$$
Then $\C$ is Morita equivalent to the category $\C(F\bowtie \Gamma, \omega(\sigma, \tau))$, where  $\omega(\sigma, \tau)$ is the image of $(\sigma, \tau)$ under a map $\Opext(k^\Gamma, kF) \to H^3(k^\Gamma, kF)$ in a certain exact sequence due to G. I. Kac. See \cite{gp-ttic}.
This implies that the composition factors of $\C$ are the composition factors of $F \bowtie \Gamma$.

\medbreak The next example shows that for a semisimple Hopf algebra $H$, the composition factors of $H$ studied in \cite{jh-hopf} need not coincide with the group algebras or dual group algebras of the composition factors of the fusion category $\Rep H$.  Moreover, the length of $H$ may be different from the length of the group-theoretical fusion category $\Rep H$.

\begin{example} Recall from \cite[Example 4.7]{jh-hopf} that the composition factors of the Hopf algebra $H = k^\Gamma {}^{\tau}\#_{\sigma} kF$
are the (simple) Hopf algebras $kF_1, \dots, kF_r, k^{\Gamma_1}, \dots, k^{\Gamma_s}$, where $F_1, \dots, F_r$ are the composition factors of $F$ and $\Gamma_1, \dots, \Gamma_s$ are the composition factors of $\Gamma$.

\medbreak Let, for instance, $H = k^{\Ss_{n-1}} \# kC_n$, $n \geq 5$,  be the bicrossed product associated to the matched pair $(C_n, \Ss_{n-1})$ arising from the exact factorization $\Ss_{n} = \Ss_{n-1} C_n$ of the symmetric group $\Ss_n$,  where $C_n = \langle (12\dots n)\rangle$. See \cite[Section 8]{mk-ext}. So that the group $C_n \bowtie \Ss_{n-1}$ is isomorphic to $\Ss_n$. Hence the composition factors of the category $\Rep H$ are $\Aa_n$ and $\mathbb Z_2$, where $\Aa_{n}$ is the alternating group on $n$ letters. In particular, the length of $\Rep H$ is 2.

\medbreak On the other hand, the composition factors of the Hopf algebra $H$ are $k^{\Aa_{n-1}}, k^{\mathbb Z_2}, k\mathbb Z_{p_1}, \dots k\mathbb Z_{p_m}$, where $p_1, \dots, p_m$ are the prime factors of $n$ counted with multiplicities.
 \end{example}

\subsection{The Drinfeld center} Let $\C$ and $\D$ be weakly group-theoretical fusion categories. Then $\C \boxtimes \D$ is weakly group-theoretical. Suppose that  $G_1, \dots, G_n$ are the composition factors of $\C$ and $F_1, \dots, F_m$ are the composition factors of $\D$. Then the composition factors of $\C\boxtimes \D$ are $G_1, \dots, G_n, F_1, \dots, F_m$. So that the length of $\C \boxtimes \D$ equals the sum of the lengths of $\C$ and $\D$.

\begin{corollary} Let $\C$ be a weakly group-theoretical fusion category with composition factors $G_1, \dots, G_n$. Then the  composition factors of $\Z(\C)$ are $G_1, \dots, G_n, G_1, \dots, G_n$. In particular, the length of $\Z(\C)$ is twice the length of $\C$. \end{corollary}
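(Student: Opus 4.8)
The plan is to deduce the statement from two facts already available: the Morita invariance of composition factors (Theorem \ref{main} and the remarks following it) and the additivity of composition factors under Deligne products established above in this section. The starting point is the standard realization of the Drinfeld center as a Morita dual. Writing $\C^{\mathrm{rev}}$ for the fusion category obtained from $\C$ by reversing the tensor product, $\C$ is an indecomposable module category over $\C \boxtimes \C^{\mathrm{rev}}$ via $(X \boxtimes Y) \otimes M = X \otimes M \otimes Y$, and $(\C \boxtimes \C^{\mathrm{rev}})^*_{\C} \cong \Z(\C)$ as fusion categories (a standard fact; see, e.g., \cite{ENO}). Hence $\Z(\C)$ is Morita equivalent to $\C \boxtimes \C^{\mathrm{rev}}$, so its composition factors coincide with those of $\C \boxtimes \C^{\mathrm{rev}}$; and, once we know $\C^{\mathrm{rev}}$ is weakly group-theoretical, the additivity under Deligne products identifies these with the composition factors of $\C$ together with those of $\C^{\mathrm{rev}}$, counted with multiplicity.

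It remains to show that $\C^{\mathrm{rev}}$ is weakly group-theoretical with exactly the same composition factors as $\C$. I would prove this by transporting a composition series through the reverse-braiding operation. Starting from a composition series $\vect = \C_0, \C_1, \dots, \C_n = \C$ with simple factors $G_1, \dots, G_n$, one checks that $\vect = \C_0^{\mathrm{rev}}, \C_1^{\mathrm{rev}}, \dots, \C_n^{\mathrm{rev}} = \C^{\mathrm{rev}}$ is again a central series with the same factors. This rests on four compatibilities: (i) $\Z(\C_i^{\mathrm{rev}}) \cong \Z(\C_i)^{\mathrm{rev}}$ as braided fusion categories, since reversing the tensor product reverses the canonical braiding of the center; (ii) a symmetric fusion category is unchanged by reversing its braiding, so $\E_i \cong \Rep G_i$ remains a Tannakian subcategory of $\Z(\C_i)^{\mathrm{rev}}$; (iii) the relation $c_{Y,X} c_{X,Y} = \id$ defining the M\" uger centralizer is insensitive to replacing $c$ by the reverse braiding, so $\E_i'$ is the same fusion subcategory of $\Z(\C_i)$ and of $\Z(\C_i)^{\mathrm{rev}}$; and (iv) the de-equivariantization of a braided fusion category by a Tannakian subcategory contained in its M\" uger center commutes with reversing the braiding, i.e. $\bigl((\E_i')^{\mathrm{rev}}\bigr)_{G_i} \cong \bigl((\E_i')_{G_i}\bigr)^{\mathrm{rev}}$. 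Combining (i)--(iv) with $(\E_i')_{G_i} \cong \Z(\C_{i-1})$ gives $\bigl((\E_i')^{\mathrm{rev}}\bigr)_{G_i} \cong \Z(\C_{i-1})^{\mathrm{rev}} \cong \Z(\C_{i-1}^{\mathrm{rev}})$, which is precisely the condition defining a central series of $\C^{\mathrm{rev}}$; since the $G_i$ are simple, it is a composition series. (Alternatively, one may take a nilpotent fusion category $\D$ Morita equivalent to $\C$, observe that $\D^{\mathrm{rev}}$ is nilpotent with the same composition factors as $\D$, since a $G$-grading becomes a $G^{\mathrm{op}}$-grading with $G^{\mathrm{op}} \cong G$ and the adjoint subcategory is unchanged, and use that $\C^{\mathrm{rev}}$ is Morita equivalent to $\D^{\mathrm{rev}}$.)

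Combining the two parts, $\C^{\mathrm{rev}}$ has composition factors $G_1, \dots, G_n$, hence $\C \boxtimes \C^{\mathrm{rev}}$ --- and therefore $\Z(\C)$ --- has composition factors $G_1, \dots, G_n, G_1, \dots, G_n$, and the length of $\Z(\C)$ is $2n$, twice the length of $\C$. The main obstacle is compatibility (iv), along with verifying (i) and (iii) against the precise conventions of \cite{DGNOI} and \cite{mueger-crossed}: since a composition series is defined entirely in terms of M\" uger centralizers and de-equivariantizations of braided fusion categories, the argument hinges on checking that each of these constructions intertwines on the nose with the ``reverse braiding'' functor --- routine, but the conceptual crux.
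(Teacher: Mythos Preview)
Your proof is correct and follows the same overall strategy as the paper: both use that $\Z(\C)$ is Morita equivalent to $\C \boxtimes \C^{\mathrm{op}}$ (the paper's $\C^{\op}$ is your $\C^{\mathrm{rev}}$), then invoke additivity of composition factors under Deligne products and Morita invariance. The only substantive difference is in how you handle $\C^{\mathrm{rev}}$. You devote most of your argument to showing that $\C^{\mathrm{rev}}$ is weakly group-theoretical with the same composition factors as $\C$, either by transporting a composition series through the reverse-braiding operation (checking compatibilities (i)--(iv)) or by passing to a nilpotent Morita representative. The paper bypasses all of this with a single observation: $\C^{\op}$ is Morita equivalent to $\C$ directly, via the indecomposable module category $\M = \C$ (so $(\C)^*_\C \cong \C^{\op}$). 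Since composition factors are Morita invariant, this immediately gives that $\C^{\op}$ has the same composition factors as $\C$, with no need to examine centers, centralizers, or de-equivariantizations. Your compatibilities (i)--(iv) are all true and your alternative via nilpotent representatives also works, but the paper's one-line Morita argument is considerably cleaner.
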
 

\begin{proof} The Drinfeld center $\Z(\C)$ is also a weakly group-theoretical fusion category. Moreover, $\Z(\C)$ is Morita equivalent to the tensor product $\C \boxtimes \C^{op}$, where $\C^{op}$ denotes the fusion category whose underlying $k$-linear category is $\C$ with opposite tensor product; see \cite{muegerII}, \cite{ostrik}. In addition, $\C^{op}$ is Morita equivalent to $\C$ with respect to the indecomposable module category $\M = \C$. \end{proof}

Let $G$ be a finite group and let $\omega$ be a 3-cocycle on $G$. Then the Drinfeld center of the pointed fusion category $\C = \C(G, \omega)$ is equivalent as a braided fusion category to the category of finite dimensional representations of the twisted Drinfeld double $D^\omega(G)$ studied in \cite{dpr}. See \cite{majid}.
As an application of the results above, we obtain a necessary condition for the representation categories of two twisted Drinfeld doubles to be equivalent:

\begin{corollary} Let $G$ and $L$ be finite groups and let $\omega_1 \in H^3(G, k^*)$, $\omega_2 \in H^3(L, k^*)$. Suppose that  $\Rep D^{\omega_1}(G)$ and $\Rep D^{\omega_2}(L)$ are equivalent as braided fusion categories. Then $G$ and $L$ have the same length and the same composition factors counted with  multiplicities.
\qed 
\end{corollary}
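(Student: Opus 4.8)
The plan is to assemble three facts already established in the paper: Majid's identification of the representation category of a twisted Drinfeld double with a Drinfeld center, the characterization of Morita equivalence in terms of braided equivalence of centers, and the computation of composition factors of pointed fusion categories. No new machinery is needed; the corollary is a consequence of combining these inputs, and the only care required is to track that all equivalences are \emph{braided} equivalences, so that the center criterion applies, and that the invariance used is genuinely Morita invariance.

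First I would rewrite the hypothesis. By Majid's theorem (\cite{majid}), for each finite group with a $3$-cocycle one has a braided equivalence $\Rep D^{\omega_1}(G) \cong \Z(\C(G, \omega_1))$ and likewise $\Rep D^{\omega_2}(L) \cong \Z(\C(L, \omega_2))$. Hence the assumed braided equivalence $\Rep D^{\omega_1}(G) \cong \Rep D^{\omega_2}(L)$ translates into a braided equivalence of Drinfeld centers $\Z(\C(G, \omega_1)) \cong \Z(\C(L, \omega_2))$.

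Next I would invoke the criterion of \cite[Theorem 3.1]{ENO2}, recalled in Section \ref{prels}: two fusion categories are Morita equivalent if and only if their Drinfeld centers are equivalent as braided fusion categories. Applying this to the braided equivalence just obtained shows that the pointed fusion categories $\C(G, \omega_1)$ and $\C(L, \omega_2)$ are Morita equivalent. Since, by Theorem \ref{main} and the ensuing discussion, the composition factors (counted with multiplicities) and the length of a weakly group-theoretical fusion category are Morita invariants, it follows that $\C(G, \omega_1)$ and $\C(L, \omega_2)$ have the same composition factors counted with multiplicities and the same length.

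Finally I would translate this back to group-theoretic data. By the computation in Subsection \ref{gttic}, following Example \ref{pointed}, the composition factors of $\C(G, \omega)$ are exactly the composition factors of $G$ counted with multiplicities, and its length equals the length of $G$; the same holds for $L$. Comparing the two sides yields that $G$ and $L$ have the same length and the same composition factors counted with multiplicities, as claimed. I do not expect a genuine obstacle here, since the argument is a chain of prior results; the one point to state carefully is that each identification is respected as a braided structure, which is precisely what makes the passage from the hypothesis on twisted doubles to Morita equivalence of the underlying pointed categories legitimate.
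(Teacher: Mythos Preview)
Your argument is correct and aligns with the paper's implicit proof: the corollary is stated with a bare \qed, the preceding paragraph having supplied the key input $\Rep D^{\omega}(G)\cong \Z(\C(G,\omega))$ as braided fusion categories. Your route---passing through \cite[Theorem 3.1]{ENO2} to deduce that $\C(G,\omega_1)$ and $\C(L,\omega_2)$ are Morita equivalent, and then invoking the Morita invariance of composition factors together with Example~\ref{pointed}---is a valid reading of ``an application of the results above''; the placement immediately after the corollary on $\Z(\C)$ also allows the variant of comparing the doubled lists of composition factors of $\Z(\C(G,\omega_1))$ and $\Z(\C(L,\omega_2))$ directly, but the two deductions are interchangeable and equally immediate.
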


\bibliographystyle{amsalpha}

\begin{thebibliography}{A}


\bibitem{tensor-exact} A. Brugui\` eres, S. Natale, \emph{Exact sequences of tensor categories}, Int. Math. Res. Not. \textbf{2011}  (24), 5644-5705 (2011).

\bibitem{dpr} R. Dijkgraaf, V. Pasquier, P. Roche, \emph{Quasi-quantum groups related to orbifold models}
In: Proc. Modern Quantum Field
Theory, Tata Institute, Bombay, 375--383, 1990.

\bibitem{DGNOI} V. Drinfeld, S. Gelaki, D. Nikshych, V. Ostrik, \emph{On braided fusion categories I}, Sel. Math. New Ser. \textbf{16}, 1--119 (2010).

\bibitem{ENO}  P. Etingof, D. Nikshych, V. Ostrik,
\emph{On fusion categories}, Ann. Math. \textbf{162}, 581--642 (2005).

\bibitem{ENO2}  P. Etingof, D. Nikshych, V. Ostrik,
\emph{Weakly group-theoretical and solvable fusion categories}, Adv. Math.
\textbf{226}, 176--205 (2011).

\bibitem{gel-nik} S. Gelaki, D. Nikshych, \emph{Nilpotent fusion categories},
Adv. Math. \textbf{217}, 1053--1071
(2008).

\bibitem{majid} S. Majid,  \emph{Quantum double for quasi-Hopf algebras}, Lett. Math. Phys. \textbf{45}, 1--9 (1998).

\bibitem{mk-ext} A. Masuoka,  \emph{Extensions of Hopf algebras}, Trab. Mat. \textbf{41/99}, FaMAF., Univ. Nac. de C\'  ordoba, 1999.


\bibitem{muegerII}M. M\" uger, \emph{From subfactors to categories and topology II: The quantum double of tensor categories
and subfactors}, J. Pure Appl. Algebra \textbf{180}, 159-219 (2003).

\bibitem{mueger-crossed} M. M\" uger, \emph{Galois extensions of braided tensor categories
and braided crossed G-categories}, J. Algebra \textbf{277}, 256--281  (2004).

\bibitem{gp-ttic} S. Natale,  \emph{On group theoretical Hopf algebras and exact factorizations of finite groups}, J. Algebra  \textbf{270}, 199--211 (2003).

\bibitem{jh-hopf} S. Natale,  \emph{Jordan-H\" older theorem for finite dimensional Hopf algebras}, Proc. Am. Math. Soc. \textbf{143}, 5195--5211 (2015). 

\bibitem{nik} D. Nikshych, \emph{Non-group-theoretical semisimple Hopf algebras from group actions on fusion categories}, 
Sel. Math. New Ser. \textbf{14}, 145--161 (2008).

\bibitem{ostrik} V. Ostrik, \emph{Module categories over the Drinfeld double of a finite group}, Int. Math. Res. Not. \textbf{2003} (27), 1507--1520(2003).

\bibitem{tambara} D. Tambara, \emph{Invariants and semi-direct products for finite group actions on tensor categories}, J. Math. Soc. Japan \textbf{53}, 429--456 (2001).

\bibitem{turaev}  V. Turaev, \emph{Homotopy  quantum  field  theory}, EMS Tracts in Mathematics \textbf{10}, European Mathematical Society, Zurich, 2010.

\end{thebibliography}

\end{document}